\newcommand{\Q}{\mathbb{Q}}
\newcommand{\R}{\mathbb{R}}
\newcommand{\N}{\mathbb{N}}
\newcommand{\ran}{\operatorname{ran}}
\newcommand{\avg}{\operatorname{avg}}
\newcommand{\intdiff}{\ooalign{\hidewidth\raise1ex\hbox{.}\hidewidth\cr$-$\cr}}
\newcommand{\KO}{\mathcal{O}}
\newcommand{\CK}{\operatorname{CK}}
\newcommand{\true}{\top}
\newcommand{\false}{\bot}
\DeclareMathOperator*{\infconj}{\bigwedge\mkern-15mu\bigwedge}
\DeclareMathOperator*{\infdis}{\bigvee\mkern-15mu\bigvee}
\newcommand{\M}{\mathcal{M}}
\theoremstyle{theorem}
\newtheorem{theorem}{Theorem}[section]
\newtheorem{lemma}[theorem]{Lemma}
\newtheorem{proposition}[theorem]{Proposition}
\theoremstyle{definition}
\newtheorem{definition}[theorem]{Definition}
\begin{document}

\title{On the complexity of the theory of a computably presented metric structure}

\author{Caleb Camrud}
\address{Department of Mathematics\\
Iowa State University\\
Ames, Iowa 50011}
\email{ccamrud@iastate.edu}

\author{Isaac Goldbring}
\thanks{Goldbring was partially supported by NSF grant DMS-2054477.}
\address{Department of Mathematics\\
University of California, Irvine\\
340 Rowland Hall (Bldg.\# 400)\\
Irvine, CA 92697-3875}
\email{isaac@math.uci.edu}

\author{Timothy H. McNicholl}
\address{Department of Mathematics\\
Iowa State University\\
Ames, Iowa 50011}
\email{mcnichol@iastate.edu}

\begin{abstract}
We consider the complexity (in terms of the arithmetical hierarchy) of the various 
quantifier levels of the diagram of a computably presented metric structure.  
As the truth value of a sentence of continuous logic may be any real in $[0,1]$, we introduce two kinds 
of diagrams at each level: the \emph{closed} diagram, which encapsulates weak inequalities of the form $\phi^\M \leq r$, and the \emph{open} diagram, which encapsulates strict inequalities of the form $\phi^\M < r$.  
We show that the closed and open $\Sigma_N$ diagrams are $\Pi^0_{N+1}$ and $\Sigma_N$ respectively, and that the 
closed and open $\Pi_N$ diagrams are $\Pi^0_N$ and $\Sigma^0_{N + 1}$ respectively.  
We then introduce effective infinitary formulas of continuous logic and extend our results to the hyperarithmetical hierarchy.  Finally, we demonstrate that our results are optimal.
\end{abstract}
\maketitle

\section{Introduction}\label{sec:intro}

Suppose $\mathcal{A}$ is a \emph{computably presented} countable structure, that is, we have numbered the elements of its domain so that the resulting operations and relations on the natural numbers are computable.
A longstanding and ongoing line of inquiry in computable model theory is to study the complexity of the elementary (i.e. complete) diagram of such models at the various quantifier levels.  In particular, such a model is said to be \emph{$N$-decidable} if the set of the $\Sigma_N$-sentences of its elementary diagram is computable.  A seminal result in this direction is the theorem of Moses and Chisholm that there is a computable linear order that is $n$-decidable for all $n$ yet not decidable \cite{Chisholm.Moses.1998}.  More recently, Fokina et. al. have investigated index sets of $n$-decidable models; i.e. the complexity of classifying such models \cite{Fokina.Goncharov.Kharizanova.Kudinov.Turetski.2015}.  More results along these lines can be found in the survey by Fokina, Harizanov, and Melnikov \cite{Fokina.Harizanov.Melnikov.2014}.

Here, we wish to initiate a similar program for metric structures in the context of continuous logic as expounded in 
\cite{Ben-Yaacov.Berenstein.Henson.Usvyatsov.2008}.  We use the framework for studying the computability of metric structures that has evolved over approximately the past decade (see e.g. \cite{Melnikov.2013}, \cite{Franklin.McNicholl.2020} ).  There are two difficulties that must be confronted at the outset.
One difficulty is that for a sentence $\phi$ of continuous logic, the truth value of $\phi$ can be any real in $[0,1]$, with $0$ representing truth and $1$ representing falsity.  Another difficulty is that the domain of a typical metric structure is uncountable, whence the inclusion of parameters in our sentences would immediately pose complications for a computability-theoretic analysis.  
Our solution to the first difficulty is to study two kinds of diagrams: \emph{closed} diagrams, corresponding to inequalities of the form $\phi^\M \leq r$, and \emph{open} diagrams, corresponding to inequalities of the form $\phi^\M < r$.  (Here $\phi^\M$ is the truth-value of $\phi$ in the model $\M$.)
We leave consideration of possible solutions of the second obstacle for future work.  Consequently, we only consider parameter-free sentences.

In the classical case, the complexity of the levels of a diagram of a computably presented model is very straightforward: the collection of true $\Sigma_N$ sentences is $\Sigma^0_N$ and the collection of true $\Pi_N$ sentences is $\Pi^0_N$.  
True arithmetic demonstrates that these bound are optimal.  We find, however, that in the context of continuous logic, the relation is not so straightforward.  For example, in our first main result (Theorem \ref{thm:fin.up}), we show that the 
closed $\Sigma_N$ diagram is $\Pi^0_{N+1}$, so that we obtain neither the expected quantifier nor the expected level of complexity.  This result may seem surprising at first due to its dissonance with the classical case.  However, some
reflection on the nature of computation with real numbers will likely reveal it is the only answer possible.  Nevertheless, in our second main result (Theorem \ref{thm:fin.lwr}), we show that our upper bounds in the finite case are indeed optimal.  

We then extend our results to infinitary continuous logic.  In this context, we use the hyperarithmetical hierarchy to gauge complexity.  The theory of infinitary continuous logic has been previously studied in  \cite{Ben-Yaacov.Iovino.2009} and \cite{Eagle.2017}.   
To the best of our knowledge, this is the first paper to consider effective infinitary logic for metric structures.  
As might be expected, our results for infinitary logic (Theorems \ref{thm:inf.up} and \ref{thm:inf.lwr}), parallel our findings for finitary logic.  However, the availability of infinite disjunctions yields simpler demonstrations of the lower bounds.  

The paper is organized as follows.  Section \ref{sec:bak} covers relevant background from computability theory, computable analysis, and continuous logic. Section \ref{sec:prelim} lays out the framework for effective infinitary continuous logic as well as some combinatorial results which support our work on finitary logic.  Upper and lower bounds for the finitary case as presented in Sections \ref{sec:fin.up} and \ref{sec:fin.lwr} respectively.  The upper and lower bounds for the infinitary case are demonstrated in Section \ref{sec:inf}.
Finally, Section \ref{sec:conclusion} summarizes our findings and presents some avenues for further investigation.

\section{Background}\label{sec:bak}

\subsection{Background from continuous Logic}

We generally follow the framework of  \cite{Ben-Yaacov.Berenstein.Henson.Usvyatsov.2008}.  
However, we limit our connectives to $\neg$, $\frac{1}{2}$, and $\intdiff$.  
The universal and existential quantifiers are replaced by `$\sup$' and `$\inf$' respectively.
In the following, by \emph{language}, we mean a signature for a metric structure.  A language in this sense includes a modulus of uniform continuity for each predicate symbol and each function symbol.   When $\M$ is an $L$-structure, we denote the domain of $\M$ as $|\M|$.

The $\Sigma_N$ and $\Pi_N$ wff's of a language $L$ are defined as in the classical case.  For example, if $\phi$ is a quantifier-free wff of $L$, then $\inf_{x_1} \sup_{x_2} \phi$ is a $\Sigma_2$ wff of $L$.

The language $L_{\omega_1\omega}$ is considered in the sense of Eagle in \cite{Eagle.2017} as opposed to the language given by Ben-Yaacov and Iovino in \cite{Ben-Yaacov.Iovino.2009}. The key distinction is that $L_{\omega_1\omega}$ in \cite{Eagle.2017} does not require every infinitary formula to have a modulus of uniform convergence, while the language of \cite{Ben-Yaacov.Iovino.2009} does.   Adding this extra condition complicates the effective encoding of the computable infinitary formulas.  However, as we shall see later, our results will hold 
in any reasonable effectivization of the framework of Ben-Yaacov and Iovino.

A key terminological difference with classical infinitary logic is that $\infdis$ is used for infinite conjunction and $\infconj$ for infinite disjunction.  That is, $\infdis_n$ is interpreted as $\sup_n$ and $\infconj_n$ is interpreted as $\inf_n$.  The reasons for this are clear when considering the ordered set of real numbers as a lattice.

\subsection{Background from computability theory}

Familiarity with standard computability-theoretic concepts like computable enumerability, oracle computability, the arithmetical hierarchy, and the relationship between each of these is assumed. A thorough treatment of these subjects can be found in \cite{Soare.1987}, \cite{Cooper.2004}.  For background on the hyperarithmetical hierarchy, see \cite{Ash.Knight.2000} and \cite{Sacks.1990}.

Let $\mathcal{O}$ denote Kleene's system of notations for the computable ordinals.  
If $\alpha < \omega_1^{\CK}$, then $\langle \alpha \rangle$ denotes the set of all notations for $\alpha$.
	
A real number $r$ is \emph{computable} if there is an effective procedure which, given $k\in \N$, produces a rational number $q$ such that $|r-q|<2^{-k}$.  A sequence $(r_n)_{n \in \N}$ of reals is computable if it is computable uniformly in $n$.  By an \emph{index} of such a sequence we mean an index of a Turing machine that computes it.

Suppose $(M,d)$ and $(M',d')$ are metric spaces, and let $\Gamma : M \rightarrow M'$.  
A map $\Delta : \N \rightarrow \N$ is called a \emph{modulus of continuity} for $\Gamma$ if 
$d(a,b)\leq 2^{-\Delta(k)}$ whenever $d'(\Gamma(a),\Gamma(b))\leq 2^{-k}$.
A map $\Gamma:M \to M'$ is called \emph{effectively uniformly continuous} if it has a computable modulus of uniform continuity.  

In the following, $L$ denotes an effectively numbered language with uniformly computable moduli of uniform continuity.   That is, there is an algorithm that given a number assigned to a predicate or function symbol $\phi$ computes the modulus function of $\phi$.  Moreover, unless otherwise mentioned, every structure will be assumed to be an $L$-structure.

Our framework for the computability of metric structures is essentially that in \cite{Franklin.McNicholl.2020}.  Given a structure $\M$ and $A\subseteq |\M|$, we define the \emph{algebra generated by} $A$ to be the smallest subset of $|\M|$ containing $A$ that is closed under every function of $\M$.   A pair $(\mathcal{M},g)$ is called a \emph{presentation} of $\M$ if $g:\N\to |\M|$ is a map such that the algebra generated by $\ran(g)$ is dense. We use $\M^\sharp$ to denote presentations of a structure $\M$. Given a presentation $\M^\sharp=(\M,g)$, every $a\in\ran(g)$ is called a \emph{distinguished point} of $\M^\sharp$, and each point in the algebra generated by the distinguished points is called a  \emph{rational point} of $\M^\sharp$. The set of all rational points of $\M^\sharp$ is denoted $\mathbb{Q}(\M^\sharp)$.
By an \emph{open rational ball} of $\M^\sharp$ we mean an open ball of $\M$ whose radius is rational and whose center is a rational point of $\M^\sharp$.  By a \emph{rational cover} of $\M^\sharp$ we mean a finite set of rational balls of $\M^\sharp$ that covers $|\M|$.

A presentation $\M^\sharp$ is \emph{computable} if the predicates of $\M$ are uniformly computable on the rational points of $\M^\sharp$. Since the metric is a binary predicate on $\M$, this entails that the distance between any two rational points is uniformly computable. We say that a metric structure is \emph{computably presentable} if it has a computable presentation. We say that a presentation $\M^\sharp$ is \emph{computably compact} if the set of its rational covers is computably enumerable.  Lastly, we define an \emph{index} of a computable presentation $\M^\sharp$ to be a code of a Turing machine that computes the predicates of $\M$ on the rational points of $\M^\sharp$.

\section{Preliminaries}\label{sec:prelim}

\subsection{Preliminaries from classical logic and computability}

We begin with some relational notation which will facilitate the statements of many of our results and their proofs.

\begin{definition}\label{def:e.u.R}
Let $N \in \N$, and suppose $R \subseteq \N^{N + 1}$. 
\begin{enumerate}
	\item $\neg R = \N^{N + 1} - R$.\label{def:e.u.R::neg}

	\item $\vec{\exists} R = \{n \in \N\ :\ \exists x_1 \forall x_2 \ldots Qx_N\ R(n, x_1, \ldots, x_N)\}$.\label{def:e.u.R::exists}
	
	\item $\vec{\forall} R = \{n \in \N\ :\ \forall x_1 \exists x_2 \ldots Qx_N\ R(n, x_1, \ldots, x_N)\}$.\label{def:e.u.R::forall}
\end{enumerate} 
\end{definition}

In Definition \ref{def:e.u.R}.\ref{def:e.u.R::exists}, $Q$ denotes the quantifier $\forall$ if $N$ is even and $\exists$ if $N$ is odd.  Similarly, in Definition \ref{def:e.u.R}.\ref{def:e.u.R::forall}, 
$Q$ denotes the quantifier $\forall$ if $N$ is odd and $\exists$ if $N$ is even. 
 We will follow these conventions in the sequel.  

Given $R\subseteq \N^{N+1}$, we also set 	
\[
R^* = \{(n, x_1, \ldots, x_N)\ \in \N^{N + 1}\ :\ \forall x_1' \leq x_1 \exists x_2' \leq x_2 \ldots Qx_N' \leq x_N\ R(n, x_1', \ldots, x_N')\}.
\]
Note that $R \equiv_{\rm T} R^*$.  Finally, let $\chi_R$ denote the characteristic (indicator) function of $R$.  

We fix a uniformly computable family $(R_N)_{N \in \N}$ of relations so that for each $N \in \N$, 
$R_{2N} \cup R_{2N + 1} \subseteq \N^{N + 2}$, $\vec{\forall} R_{2N}$ is $\Pi^0_{N + 1}$-complete, and $\vec{\exists} R_{2N + 1}$ is $\Sigma^0_{N+1}$-complete.

\subsection{Preliminaries from continuous logic}

We begin by formally defining the open and closed diagrams of a metric structure.

\begin{definition}\label{def:diagrams}
Let $\M$ be an $L$-structure.  In the following, $\phi$ ranges over sentences of $L$ and $q$ ranges over $[0,1]\cap\Q$.
\begin{enumerate}
	\item The \emph{closed (resp. open) quantifier-free diagram of $\M$} is the set of all pairs $(\phi, q)$ so that 
	$\phi$ is quantifier-free and $\phi^\M \leq q$ (resp. $\phi^\M < q$). 
		
	\item For every positive integer $N$, the \emph{closed (resp. open) $\Pi_N$ diagram of $\M$} is the set of all pairs 
	$(\phi,q)$ so that $\phi$ is $\Pi_N$ and $\phi^\M \leq q$ (resp. $\phi^\M < q$).  The closed and open $\Sigma_N$ diagrams are defined similarly.
\end{enumerate}
\end{definition}

We now define the computable wff's of $L_{\omega_1\omega}$ and their codes by effective transfinite induction.  We follow the development of the classical case in \cite{Ash.Knight.2000}.  We presume an effective enumeration of the quantifier-free wff's of $L$.  We also presume effective codings of the following.
\begin{enumerate}
	\item All pairs of the form $(j, \overline{z})$, where $j \in \N$ and $\overline{z}$ is a tuple of variables.
	
	\item All quadruples of the form $(X, a, \overline{x}, e)$, where $X \in \{\Sigma, \Pi\}$, $a,e \in \N$, and $\overline{x}$ is a tuple of variables.
\end{enumerate}
When $\xi$ is a tuple of either of the above types, we let $\overline{\xi}$ denote the code of $\xi$.  

For every $X\in \{\Sigma,\Pi\}$ and $a\in \mathcal{O}$, we first define the index set $S_a^X$ in such a way that if $a\in\langle\alpha\rangle$, then every formula with indices in $S_a^X$ will be $X_\alpha$.  

We begin by setting $S_1^\Sigma$ and $S_1^\Pi$ to be the set of codes of all quantifier-free, finitary formulas of $\mathcal{L}$.  (Recall that $1$ denotes $0$ in Kleene's $\KO$.)  For every $a \in \KO - \{1\}$ and $X\in\{\Sigma,\Pi\}$, let $S_a^X$ be the set of codes of all quadruples of the form $(X,a,\overline{x},e)$, where $\overline{x}$ is a finite tuple of variable symbols, and $e\in\N$.  

Now for every $a\in\mathcal{O}$, $X\in\{\Sigma,\Pi\}$, and tuple of variable symbols $\overline{x}$, we define $P(X,a,\overline{x})$ to be the set of all codes of pairs $(j,\overline{z})$, where 
$j$ codes a quadruple $(X,b,\overline{y},e')$ with $b<_\mathcal{O} a$ and $\overline{z}$ is a finite sequence of variable symbols of $\overline{y}$ not contained in $\overline{x}$.

For each $i \in S^\Sigma_a\cup S^\Pi_a$, we define an infinitary wff $\phi_i$ as follows:
\begin{enumerate}
	\item If $a = 1$, then $\phi_i$ is the quantifier-free finitary wff indexed by $i$.  
	
	\item Suppose $a > 1$ and $i = \overline{(X,a,\overline{x},e)}$.  
	\begin{enumerate}
		\item If $X = \Sigma$, then 
		\[
		\phi_i = \infconj_{\overline{(j,\overline{z})} \in W_e\cap P(\Pi,a,\overline{x})} \inf_{\overline{z}} \phi_j.
		\]
	
		\item If $X = \Pi$, then 
		\[
		\phi_i = \infdis_{\overline{(j,\overline{z})} \in W_e\cap P(\Sigma,a,\overline{x})} \sup_{\overline{z}} \phi_j.
		\]
	\end{enumerate}
\end{enumerate}

For every computable ordinal $\alpha$, we let $\Sigma^c_\alpha$ denote the set of all formulas $\phi_i$ where $i\in\bigcup_{a\in\langle \alpha\rangle}S^\Sigma_a$.  Similarly, $\Pi^c_\alpha$ denotes the set of all formulas $\phi_i$ where $i\in\bigcup_{a\in\langle \alpha\rangle}S^\Pi_a$.   If $\psi =\phi_i$, then we say that $i$ is a \emph{code} of $\psi$.  By a \emph{computable infinitary formula}, we mean an element of $\Sigma_\alpha^c\cup\Pi_\alpha^c$ for some computable ordinal $\alpha$.

It is fairly routine to verify that all logical operations can be performed effectively via this coding system.  
For example, from an $i$ that codes an infinitary wff $\phi$, it is possible to compute a code of $\sup_x \phi$.  

\subsection{Combinatorial preliminaries}

We introduce here some results that will support our demonstration of lower bounds.  
Among these, our main result (Theorem \ref{thm:encode}) is a principle for representing $\Sigma^0_N$ and $\Pi^0_N$ sets as solutions of inequalities 
involving infinite series.  We believe this connection is sufficiently novel to merit consideration on its own.

We begin with the following lemma which is easily verified by simultaneous induction on $N$.  Note that the suprema and infima range over $\N$.

\begin{lemma}\label{lm:inf.sup}
For $R \subseteq \N^{N+1}$ and $n \in \N$, we have:
\begin{enumerate}
	\item $n \in \vec{\forall} R$ if and only if 
$\inf_{x_1} \sup_{x_2} \ldots Q_{x_N} \chi_R(n, x_1, \ldots, x_N) = 1$.  

	\item $n \in \vec{\exists} R$ if and only if 
	$\sup_{x_1} \inf_{x_2} \ldots Q_{x_N} \chi_R(n,x_1, \ldots, x_N) = 1$.
\end{enumerate}
\end{lemma}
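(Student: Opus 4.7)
The plan is to proceed by simultaneous induction on $N$, proving (1) and (2) together. The entire argument rests on one elementary observation about $\{0,1\}$-valued functions: if $f : \N \to \{0,1\}$, then $\sup_x f(x)$ and $\inf_x f(x)$ again lie in $\{0,1\}$, with $\sup_x f(x) = 1$ iff $f(x)=1$ for some $x$, and $\inf_x f(x) = 1$ iff $f(x)=1$ for every $x$. Applied iteratively, this says that every nested $\sup/\inf$ expression built from $\chi_R$ remains $\{0,1\}$-valued, and that $\sup$ tracks $\exists$ while $\inf$ tracks $\forall$.

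For the base case $N=1$: if $R \subseteq \N^2$, then $\inf_{x_1} \chi_R(n,x_1) = 1$ iff $\chi_R(n,x_1)=1$ for every $x_1$, iff $n \in \vec{\forall} R$; the statement for $\vec{\exists}$ is symmetric with $\sup$.

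For the inductive step, assume both (1) and (2) hold for all relations of the form $S \subseteq \N^{N+1}$, and let $R \subseteq \N^{N+2}$. To prove (1) for $R$: fix $n$ and, for each $x_1 \in \N$, consider the relation $S_{n,x_1}(y_1,\dots,y_{N}) \iff R(n,x_1,y_1,\dots,y_{N})$. By the observation above, $\sup_{x_2}\inf_{x_3}\cdots Q_{x_{N+1}} \chi_R(n,x_1,\dots,x_{N+1})$ equals $\sup_{y_1}\inf_{y_2}\cdots Q_{y_N} \chi_{S_{n,x_1}}(y_1,\dots,y_N)$, which by the induction hypothesis applied to (2) equals $1$ exactly when $\exists y_1 \forall y_2 \cdots Q y_N \, S_{n,x_1}(y_1,\dots,y_N)$, i.e.\ exactly when $\exists x_2 \forall x_3 \cdots Q x_{N+1} R(n,x_1,\dots,x_{N+1})$. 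Taking $\inf_{x_1}$ of the resulting $\{0,1\}$-valued quantity then yields $1$ precisely when the universal statement over $x_1$ holds, which is the definition of $n \in \vec{\forall} R$. The proof of (2) for $R$ is perfectly symmetric, this time invoking the inductive hypothesis for (1) on the inner quantifiers.

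The only real obstacle is bookkeeping: one has to keep track of how the distinguished parameter $n$ and the successive peeled-off variables $x_1$ interact with the induction hypothesis, and that the quantifier $Q$ alternates correctly with the parity of $N$. A cleaner variant, which I would likely adopt in the write-up, is to first prove the parameter-free version (for arbitrary $S \subseteq \N^N$, $\inf_{x_1}\sup_{x_2}\cdots Q_{x_N}\chi_S(x_1,\dots,x_N)=1$ iff $\forall x_1 \exists x_2 \cdots Q x_N \, S(x_1,\dots,x_N)$, and dually) and then specialize by freezing $n$ in the first slot of $R$. Once the parity bookkeeping is in place, each case of the induction is a one-line unfolding of the $\sup$/$\inf$ definitions.
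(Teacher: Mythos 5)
Your proof is correct and follows exactly the route the paper has in mind: the paper offers no written proof beyond the remark that the lemma is ``easily verified by simultaneous induction on $N$,'' and your simultaneous induction, driven by the observation that $\sup$ and $\inf$ of $\{0,1\}$-valued functions track $\exists$ and $\forall$, is precisely that verification. The bookkeeping issues you flag (re-indexing the peeled-off variable as the parameter slot, and the parity of $Q$) are handled correctly, so nothing further is needed.
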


To state our main theorem of this section, we need the following.

\begin{definition}
For $K,N \in \N$ and $f : \N^{N + 1} \rightarrow \R$ a bounded function, set: 
\begin{eqnarray*}
\Gamma_K(f; x_1, \ldots, x_N) & = & \sum_{x_0 = 0}^K 2^{-(x_0 + 1)} f(x_0, \ldots, x_N)\\
\Gamma(f; x_1, \ldots, x_N) & = & \sum_{x_0 = 0}^\infty 2^{-(x_0 + 1)} f(x_0, \ldots, x_N).\\
\end{eqnarray*}
\end{definition}

We define $\Gamma(f):\N^N\to \R$ by setting $\Gamma(f)(x_1,\ldots,x_N) = \Gamma(f;x_1,\ldots,x_N)$. 
 We note that $\Gamma(f)$ is computable if $f$ is computable and, in this case, an index of $\Gamma(f)$ can be computed from an index of $f$ and a bound on $f$.

We are now ready to state and prove the key result of this section.  In what follows, we view elements of $\N^{N+2}$ as being of the form $(x_0,x_1,\ldots,x_N,n)$.

\begin{theorem}\label{thm:encode}
Let $R \subseteq \N^{N + 2}$, and let $n \in \N$.
\begin{enumerate}
	\item $n \in \vec{\forall} R$ if and only if \label{thm:encode::forall}
	\[
	\inf_{x_1} \sup_{x_2} \ldots Q_{x_N} \Gamma(1 - \frac{1}{2} \chi_{R^*}; x_1, \ldots, x_N, n) \leq \frac{1}{2}.
	\]
	
	\item $n \in \vec{\exists} R$ if and only if \label{thm:encode::exists}
	\[
	\sup_{x_1} \inf_{x_2} \ldots Q_{x_N} \Gamma(\frac{1}{2} \chi_{(\neg R)^*}; x_1, \ldots, x_N, n) < \frac{1}{2}.
	\]
\end{enumerate}
\end{theorem}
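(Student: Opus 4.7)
The plan is to handle part~(1) directly and to derive part~(2) from it by duality. For part~(1), I begin by evaluating $\Gamma$ pointwise. Since $R^*$ starts with a bounded universal $\forall x_0'\leq x_0$, the set $\{x_0:R^*(x_0,\vec{x},n)\}$ is downward closed in $x_0$, so it equals either all of $\N$ or an initial segment $\{0,1,\ldots,k\}$. Summing the resulting geometric series gives
\[
\Gamma(1-\tfrac{1}{2}\chi_{R^*};\vec{x},n)=\tfrac{1}{2}+2^{-(k+2)}
\]
(with the convention $2^{-\infty}:=0$), which always lies in $[1/2,1]$ and equals $1/2$ exactly when $R^*(x_0,\vec{x},n)$ holds for every $x_0$.

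Next I commute the outer $\inf_{x_1}\sup_{x_2}\cdots Q_{x_N}$ through the infinite series defining $\Gamma$. The alternating bounded quantifiers in $R^*$ make $\chi_{R^*}$ monotone in every odd-indexed coordinate and antimonotone in every even-indexed one, so $1-\tfrac{1}{2}\chi_{R^*}$ has the opposite monotonicities. Each outer $\inf$ or $\sup$ is therefore a limit as its variable tends to infinity, and dominated convergence (with uniform bound $\sum 2^{-(x_0+1)}=1$) lets me pull each such limit through $\sum_{x_0}$. After $N$ applications, the quantified expression equals
\[
\sum_{x_0=0}^{\infty}2^{-(x_0+1)}\bigl(1-\tfrac{1}{2}h(x_0,n)\bigr),
\]
where $h(x_0,n)=\sup_{x_1}\inf_{x_2}\cdots Q'_{x_N}\chi_{R^*}(x_0,x_1,\ldots,x_N,n)$ and $Q'_{x_N}$ is the quantifier dual to $Q_{x_N}$ (each $\inf$ and $\sup$ flips because $1-\tfrac{1}{2}\chi_{R^*}$ has monotonicity opposite to $\chi_{R^*}$).

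Since $h(x_0,n)\in\{0,1\}$, this sum is $\leq 1/2$ iff $h(x_0,n)=1$ for every $x_0$. By Lemma~\ref{lm:inf.sup}(2) applied with $(x_0,n)$ as parameters, $h(x_0,n)=1$ means $\exists x_1\,\forall x_2\cdots R^*(x_0,x_1,\ldots,x_N,n)$, and prepending $\forall x_0$ yields $n\in\vec{\forall}R^*$. A standard argument (forward: take each bounded-existential witness to be the maximum of finitely many unbounded ones; reverse: pigeonhole on the finite range available to each bounded existential) shows $\vec{\forall}R=\vec{\forall}R^*$, completing part~(1). For part~(2), the identities $\Gamma(\tfrac{1}{2}\chi)=1-\Gamma(1-\tfrac{1}{2}\chi)$ and $\sup(1-f)=1-\inf f$ convert the "$<1/2$" assertion into $\inf_{x_1}\sup_{x_2}\cdots Q_{x_N}\,\Gamma(1-\tfrac{1}{2}\chi_{(\neg R)^*};\vec{x},n)>1/2$, which by part~(1) applied to $\neg R$ says $n\notin\vec{\forall}(\neg R)=\N\setminus\vec{\exists}R$, i.e.\ $n\in\vec{\exists}R$.

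The main obstacle is the monotonicity bookkeeping in the commutation step: one must verify that the alternating monotonicity of $\chi_{R^*}$ survives each application of $\sup_{x_i}$ or $\inf_{x_i}$ (a sup or inf of functions monotone in a separate variable remains monotone in that variable), and track the parity of $N$ so that $Q'_{x_N}$ lines up with the hypothesis of Lemma~\ref{lm:inf.sup}(2). Once this is in place, the rest follows cleanly from the discrete value set of $\Gamma$ and the standard equivalence $\vec{\forall}R=\vec{\forall}R^*$.
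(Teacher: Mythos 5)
Your argument is correct, and it gets through the crucial step by a genuinely different route than the paper, while sharing the same outer skeleton: both proofs reduce to the equivalence $\vec{\forall}R=\vec{\forall}(R^*)$ (Lemma \ref{lm:forall.R.*}, which you sketch at essentially the same level of detail as the paper does), both evaluate a series whose terms lie in $\{\frac{1}{2},1\}$, and both obtain part (2) from part (1) by complementation. The divergence is in how the quantifier prefix is moved past the series. The paper never interchanges $\sup$/$\inf$ with the infinite sum exactly: it truncates to $\Gamma_K$, proves the base-$2$ carry Lemma \ref{lm:carry}, and then proves Lemma \ref{lm:swap}, an inequality-level swap valid only at the threshold $\frac{1}{2}$, by induction on the prefix using monotonicity of $R^*$ in the innermost bounded variable (the $\xi=\max$ versus $\xi=0$ trick); it then shuttles between $\Gamma_K$ and $\Gamma$ via a $2^{-(K+1)}$ estimate, treating the two directions of the equivalence separately. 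You instead exploit the monotonicity of $\chi_{R^*}$ in \emph{every} coordinate (precisely what the $*$ operation provides) to observe that each outer $\inf$ or $\sup$ of the theorem's prefix is a monotone limit as its variable tends to infinity, and that this monotonicity persists under the inner quantifications; dominated convergence with dominating sequence $2^{-(x_0+1)}$ then yields the exact identity $\inf_{x_1}\sup_{x_2}\cdots Q_{x_N}\Gamma(1-\frac{1}{2}\chi_{R^*};x_1,\ldots,x_N,n)=\Gamma(\inf_{x_1}\sup_{x_2}\cdots Q_{x_N}(1-\frac{1}{2}\chi_{R^*});n)$, after which the $\{\frac{1}{2},1\}$-valued evaluation and Lemma \ref{lm:inf.sup} settle both directions simultaneously. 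Your route is shorter and makes the role of $R^*$ conceptually transparent (it converts quantifiers into monotone limits, which is exactly what licenses the otherwise false interchange the paper warns about); the paper's route is more elementary and combinatorial, using only finite sums and carrying, at the cost of a weaker swap lemma and a separate truncation argument. Two minor remarks: the closed form $\frac{1}{2}+2^{-(k+2)}$ in your opening paragraph is correct but never actually used afterwards, and your parenthetical hints for $\vec{\forall}R=\vec{\forall}R^*$ (maxima of Skolem witnesses one way, pigeonhole on the bounded witness the other way) are sound and match the rigor of the paper's own proof sketch of Lemma \ref{lm:forall.R.*}.
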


The proof of the previous theorem requires a few preparatory lemmas.  For the first lemma, note that if $f:\N\to \mathbb R$ is a bounded function, then $\Gamma_K(f)$ is simply a real number (i.e. a constant).

\begin{lemma}\label{lm:carry}
If $f:\N \rightarrow \{\frac{1}{2},1\}$, then for every $K \in \N$,
$\Gamma_K(f) \leq \frac{1}{2}$ if and only if $f(m) = \frac{1}{2}$ for all $m < K$.
\end{lemma}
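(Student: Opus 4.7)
The plan is to prove both directions by direct computation, exploiting that the only two possible values of $f$ are $\frac{1}{2}$ and $1$, so the sum $\Gamma_K(f)$ is pinned between the value $\frac{1}{2}\sum_{m=0}^{K} 2^{-(m+1)} = \frac{1}{2}(1 - 2^{-(K+1)})$ obtained when $f$ is constantly $\frac{1}{2}$ and a larger value whenever $f$ takes the value $1$ somewhere in the sum. The key arithmetic fact driving everything is the geometric identity $\sum_{m=0}^{K-1} 2^{-(m+1)} = 1 - 2^{-K}$.

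For the backward direction, I assume $f(m) = \frac{1}{2}$ for all $m < K$, split the sum into the initial block $m = 0, \ldots, K-1$ and the single final term $m = K$, and compute
\[
\Gamma_K(f) = \frac{1}{2}\sum_{m=0}^{K-1} 2^{-(m+1)} + 2^{-(K+1)} f(K) = \frac{1}{2}(1 - 2^{-K}) + 2^{-(K+1)} f(K).
\]
Since $f(K) \leq 1$, the second term is at most $2^{-(K+1)}$, and the total is at most $\frac{1}{2}$, as required.

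For the forward direction, I argue the contrapositive: assume there is some $m_0 < K$ with $f(m_0) = 1$. Using $f(m) \geq \frac{1}{2}$ for every $m$, I bound
\[
\Gamma_K(f) \geq 2^{-(m_0+1)} + \frac{1}{2}\Bigl(\sum_{m=0}^{K} 2^{-(m+1)} - 2^{-(m_0+1)}\Bigr) = \frac{1}{2} + 2^{-(m_0+2)} - 2^{-(K+2)}.
\]
Because $m_0 < K$ forces $2^{-(m_0+2)} > 2^{-(K+2)}$, this yields $\Gamma_K(f) > \frac{1}{2}$, contradicting the hypothesis.

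There is no real obstacle here; the only thing to watch is the off-by-one bookkeeping, namely that the sum defining $\Gamma_K$ runs from $0$ up to and including $K$ (so there are $K+1$ terms) while the hypothesis controls only the first $K$ of them, leaving the term indexed by $K$ as a bounded ``slack'' that exactly fills the remaining gap to $\frac{1}{2}$.
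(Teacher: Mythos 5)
Your proof is correct and is essentially the paper's argument made explicit: the paper's sketch reasons informally about base-$2$ ``carries,'' which is exactly the content of your geometric-series computation showing that any value $f(m_0)=1$ with $m_0<K$ forces $\Gamma_K(f)\geq \frac{1}{2}+2^{-(m_0+2)}-2^{-(K+2)}>\frac{1}{2}$, while the all-$\frac{1}{2}$ prefix leaves exactly $2^{-(K+1)}$ of slack for the final term. Your version is, if anything, more complete, since the paper only offers a proof sketch.
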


\begin{proof}[Proof sketch]
 Fix $K \in \N$. Consider the given sum in base $2$. Any $m < K$ for which $f(m)=1$ leads to a `carry' operation so that the $\frac{1}{2}$-position becomes $1$. Adding $f(K)$ would then force the value to be greater than $\frac{1}{2}$. 
 \end{proof}

\begin{lemma}\label{lm:forall.R.*}
Suppose $R \subseteq \N^{N+1}$.  Then $\vec\forall(R^*) = \vec{\forall} R$.
\end{lemma}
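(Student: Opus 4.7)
The plan is to establish $\vec{\forall} R = \vec{\forall} R^*$ by induction on $N$, reducing the inductive step to the case $N-2$ via the restricted relations $R_{a,b}(n, x_3, \ldots, x_N) := R(n, a, b, x_3, \ldots, x_N) \subseteq \N^{N-1}$. The base cases $N \in \{0, 1\}$ are immediate, since $R = R^*$ for $N = 0$, while $\forall y_1 \, \forall x_1' \leq y_1 \, R(n, x_1')$ collapses to $\forall x_1 \, R(n, x_1)$ for $N = 1$.

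For $\vec{\forall} R \subseteq \vec{\forall} R^*$ at $N \geq 2$, I would use a \emph{max-of-witnesses} construction. Given $n \in \vec{\forall} R$ and a first bound $y_1$, for each $x_1' \leq y_1$ let $f(x_1')$ be the least $x_2$ with $\vec{\forall} R_{x_1', x_2}(n)$, and set $y_2 := \max\{f(x_1') : x_1' \leq y_1\}$; this maximum exists because it ranges over the finite set $\{0, 1, \ldots, y_1\}$. The inductive hypothesis applied to each $R_{x_1', f(x_1')}$ gives $\vec{\forall}(R_{x_1', f(x_1')})^*(n)$, furnishing a winning strategy in the bounded depth-$(N-2)$ game. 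These strategies are combined across the finitely many $x_1' \leq y_1$ by iterating the same max-of-witnesses trick at each subsequent existential position $y_4, y_6, \ldots$, producing bounds that jointly witness $R^*(n, y_1, y_2, \ldots, y_N)$.

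For $\vec{\forall} R^* \subseteq \vec{\forall} R$ at $N \geq 2$, given $x_1$ I would set $y_1 := x_1$ and use the hypothesis to obtain $y_2$ with $\forall y_3 \exists y_4 \ldots R^*(n, x_1, y_2, y_3, \ldots, y_N)$; the aim is to extract a single $x_2^\star \leq y_2$ with $\vec{\forall}(R_{x_1, x_2^\star})^*(n)$, whereupon the inductive hypothesis converts this to $\vec{\forall} R_{x_1, x_2^\star}(n)$ and supplies the required $\exists x_2$ witness for $\vec{\forall} R(n)$. After substituting $x_1' = x_1$ in the outermost bounded $\forall$ of $R^*$, the bounded existential $\exists x_2' \leq y_2$ must be pushed past the chain of outer quantifiers. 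This is achieved by repeated application of two facts: at each outer $\forall y_{2k+1}$, the range $[0, y_2]$ is finite, so \emph{pigeonhole} produces a value $x_2^\star$ that is chosen cofinally in $y_{2k+1}$; and the relevant tail condition, with $x_2^\star$ fixed, is \emph{downward-closed} in $y_{2k+1}$ (weakening a bounded universal condition can only make it easier to satisfy), so any cofinal subset of $\N$ on which the condition holds must in fact be all of $\N$. Peeling off the outer $\forall$'s one at a time in this way extracts $x_2^\star$ as claimed.

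The main obstacle is the reverse inclusion, and in particular the extraction of a uniform $x_2^\star \leq y_2$ from the bounded existential in $R^*$: a priori, that witness may depend on every outer parameter $y_3, y_4, \ldots, y_N$, so one must push it entirely outside the outer-game prefix. The pigeonhole-plus-downward-closure loop is the technical heart of the proof, and pairing it with the inductive hypothesis on $R_{x_1, x_2^\star}$ of arity $N-1$ is what makes the induction close cleanly.
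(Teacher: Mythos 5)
Your proof is correct, and it distributes the work a bit differently than the paper does. For the inclusion $\vec{\forall} R \subseteq \vec\forall(R^*)$ your max-of-witnesses construction is essentially the paper's Skolemization argument: answer each bound $y_{2k}$ with the maximum of the (least) witnesses over the finitely many bounded parameters, the maxima being harmless because the starred matrix is monotone in the existential bounds; routing this through the induction hypothesis changes nothing essential. For the other inclusion, which the paper's sketch dismisses as ``straightforward,'' you supply a genuine direct argument: specialize $x_1'=x_1$, push the bounded existential $\exists x_2'\le y_2$ outward across the prefix $\forall y_3\exists y_4\cdots Q y_N$ by pigeonhole over the finite range $[0,y_2]$ combined with downward-closure of the tail condition in each universally quantified bound, and then close the induction two levels down via $R_{x_1,x_2^\star}$. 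This is sound, with one presentational caveat: the exchange must proceed from the innermost quantifier outward (the crossings of the interleaved $\exists y_{2k}$'s being the trivial ones), since at the outset the witness $x_2'$ may depend on the entire tail $y_3,\ldots,y_N$ and one cannot pigeonhole over $y_3$ first; your phrase ``peeling off the outer $\forall$'s one at a time'' should be read in that inside-out order, which the two facts you cite do support. A slicker route to this direction---likely what the paper intends by ``straightforward''---is to take the contrapositive and Skolemize the negation: from $n\notin\vec{\forall} R$ one gets a witness $x_1$ and Skolem functions for the odd positions, and bounding these over the finite boxes $[0,y_2]\times[0,y_4]\times\cdots$ defeats $\vec\forall(R^*)$, so that both inclusions become mirror-image Skolemization arguments. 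Your version buys a direct, non-dualized proof at the cost of the pigeonhole machinery; the dual Skolemization buys brevity and symmetry.
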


\begin{proof}[Proof sketch]
The proof that $\vec\forall(R^*) \subseteq \vec\forall R$ is straightforward.  
The other inclusion is demonstrated via Skolemization.
\end{proof}

\begin{lemma}\label{lm:swap}
Fix $R \subseteq \N^{N + 2}$ and $1 \leq J \leq N$.  Then for every $x_1,\ldots,x_{J-1},n\in \N$ and every $K\in \N$, we have:  
\begin{enumerate}
	\item $\sup_{x_J} \inf_{x_{J+1}} \ldots Q_{x_N} \Gamma_K (1 - \frac{1}{2} \chi_{R^*}; x_1, \ldots, x_N, n) \leq \frac{1}{2}$ if and only if\\ $
	\Gamma_K(\sup_{x_J} \inf_{x_{J+1}} \ldots Q_{x_N} (1 - \frac{1}{2} \chi_{R^*}); x_1, \ldots, x_{J-1}, n) \leq \frac{1}{2}$.\label{lm:swap::sup}
	
	\item $\inf_{x_J} \sup_{x_{J+1}} \ldots Q_{x_N} \Gamma_K (1 - \frac{1}{2} \chi_{R^*}; x_1, \ldots, x_N, n) \leq \frac{1}{2}$ if and only if \\
	$\Gamma_K(\inf_{x_J} \sup_{x_{J+1}} \ldots Q_{x_N} (1-\frac{1}{2} \chi_{R^*}; x_1, \ldots, x_{J-1}, n) \leq \frac{1}{2}$.\label{lm:swap::inf}
\end{enumerate}
\end{lemma}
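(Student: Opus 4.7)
The plan is to convert both sides of each biconditional into quantifier-logical statements about $R^{*}$, and then to identify the resulting formulas by using the downward closure of $R^{*}$ in its first coordinate. Throughout, write $g = 1 - \tfrac{1}{2}\chi_{R^{*}}$, a function taking values in $\{\tfrac{1}{2}, 1\}$. Every $\sup$ and $\inf$ appearing in the statement is applied to a function with finite range in $[0,1]$, so each unfolds cleanly under the relation $\leq \tfrac{1}{2}$: $\sup_{x} F \leq \tfrac{1}{2}$ iff $\forall x\,F(x) \leq \tfrac{1}{2}$, and $\inf_{x} F \leq \tfrac{1}{2}$ iff $\exists x\,F(x) \leq \tfrac{1}{2}$.

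Combining this unfolding with Lemma \ref{lm:carry} applied to $\Gamma_K(g;\cdot)$, the left-hand side of part (1) becomes
\[
\forall x_J\,\exists x_{J+1} \cdots Q' x_N\;\forall m < K\;(m, x_1, \ldots, x_N, n) \in R^{*},
\]
where $Q'$ is the unbounded quantifier corresponding to $Q$ under $\sup \mapsto \forall$, $\inf \mapsto \exists$. On the right-hand side, letting $h(x_0, x_1, \ldots, x_{J-1}, n) = \sup_{x_J}\inf_{x_{J+1}} \cdots Q_{x_N}\,g(x_0, x_1, \ldots, x_N, n)$, which again takes values in $\{\tfrac{1}{2}, 1\}$, Lemma \ref{lm:carry} applied to $\Gamma_K(h;\cdot)$ followed by unfolding the quantifiers defining $h$ yields
\[
\forall m < K\;\forall x_J\,\exists x_{J+1} \cdots Q' x_N\;(m, x_1, \ldots, x_N, n) \in R^{*}.
\]

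The two displayed statements differ only in the placement of the bounded quantifier $\forall m < K$. To identify them I would use the downward closure of $R^{*}$ in its first coordinate: since the outermost bounded quantifier in the definition of $R^{*}$ is $\forall x_0' \leq x_0$, membership becomes strictly harder as $x_0$ grows, so $(K - 1, x_1, \ldots, x_N, n) \in R^{*}$ implies $(m, x_1, \ldots, x_N, n) \in R^{*}$ for every $m \leq K - 1$. For $K \geq 1$ this reduces both displayed formulas to the single statement $\forall x_J \exists x_{J+1} \cdots Q' x_N\;(K-1, x_1, \ldots, x_N, n) \in R^{*}$: on the left the inner $\forall m < K$ clause collapses immediately, while on the right the witnessing strategy supplied by the $m = K-1$ instance also works for every $m < K$ by downward closure. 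The case $K = 0$ is vacuous on both sides. Part (2) follows from the identical argument with the leading $\forall x_J$ replaced by $\exists x_J$. The main bookkeeping issue is tracking the parity of the $\sup/\inf$ alternation so that $Q'$ correctly matches $Q$, and then verifying that the $\forall m < K$ clause really commutes with the entire unbounded quantifier block, which is precisely what the downward-closure of $R^{*}$ in $x_0$ delivers.
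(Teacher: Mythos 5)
Your proposal is correct, and it takes a genuinely different route from the paper's proof. The paper argues by induction on $N-J$: in each step the easy inequality between $\Gamma_K$ applied to a $\sup$/$\inf$ and the $\sup$/$\inf$ of $\Gamma_K$ settles one direction, and in the $\inf$ direction the finitely many witnesses $\xi_{x_0}$ ($x_0<K$) are merged into a single witness ($\max_{x_0<K}\xi_{x_0}$ or $0$, according to the parity of $J$), using the monotonicity of $R^*$ in the coordinate $x_J$ currently being quantified. You instead unfold both sides, via Lemma \ref{lm:carry} and the attainment of all infima (finite range), into bounded-quantifier statements about $R^*$ that differ only in the position of the clause $\forall m<K$, and then collapse that clause to its hardest instance $m=K-1$ using the downward closure of $R^*$ in the summation coordinate $x_0$ (the bounded quantifier on $x_0'$ in the definition of $R^*$ is universal, which is indeed the intended reading for $R\subseteq\N^{N+2}$, as the applications of Lemmas \ref{lm:inf.sup} and \ref{lm:forall.R.*} in Theorem \ref{thm:encode} confirm). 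After the collapse both sides are literally the same statement, so no induction on $N-J$ and no parity case analysis are needed, and the identical argument handles both parts of the lemma and, in fact, any $\sup/\inf$ prefix. Your approach buys brevity and isolates exactly the monotonicity being used (in $x_0$ rather than in $x_J$); the paper's proof works coordinate by coordinate and never unfolds the whole prefix at once, but is otherwise no stronger. If you write yours up, the points to make explicit are the ones you already flag: the attainment needed for $\inf_x F\leq\frac{1}{2}\iff\exists x\,F(x)\leq\frac{1}{2}$, the trivial case $K=0$ (both inequalities hold automatically), and the fact that a quantifier prefix is monotone under pointwise implication of matrices, which is what lets the $m=K-1$ strategy serve for every $m<K$.
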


\begin{proof}
Set $G = 1 - \frac{1}{2}\chi_{R^*}$ and note that $\ran(G) \subseteq \{\frac{1}{2}, 1\}$.  Thus, 
in what follows, all suprema are maxima and all infima are minima.  Also, we may assume $K > 0$.  

We proceed by induction on $N - J$.  We begin with the base case for (\ref{lm:swap::sup}), that is, $J=N-1$.  
Without loss of generality, we may assume that one of the two quantities in (\ref{lm:swap::sup}) is no larger than $\frac{1}{2}$.  
Since $\Gamma_K(\sup_{x_N} G; x_1, \ldots, x_{N - 1}, n) \geq \sup_{x_N} \Gamma_K(G; x_1, \ldots, x_N, n)$, 
we may assume $\sup_{x_N} \Gamma_K(G; x_1, \ldots, x_N, n) \leq \frac{1}{2}$. 
By Lemma \ref{lm:carry}, we have that $G(x_0, x_1, \ldots, x_N, n)  = \frac{1}{2}$
for all $x_N \in \ N$ and all $x_0 < K$.  
By Lemma \ref{lm:carry} again, $\Gamma_K(\sup_{x_N} G; x_1, \ldots, x_{N - 1}, n) \leq \frac{1}{2}$.

We now consider the base case for (\ref{lm:swap::inf}).  Again, we may assume one of the two quantities in (\ref{lm:swap::inf}) is no larger than $\frac{1}{2}$.  Since $\Gamma_K(\inf_{x_N} G; x_1, \ldots, x_{N - 1}, n) \leq \inf_{x_N} \Gamma_K(G; x_1, \ldots, x_N, n)$, we assume $\Gamma_K(\inf_{x_N} G; x_1, \ldots, x_{N -1}, n) \leq \frac{1}{2}$.
By Lemma \ref{lm:carry}, $\inf_{x_N} G(x_0, \ldots, x_N, n) = \frac{1}{2}$ for all $x_0 < K$.  Consequently, for each $x_0 < K$, there exists 
$\xi_{x_0} \in \N$ so that $G(x_0, \ldots, x_{N-1},\xi_{x_0},n) = \frac{1}{2}$.  
Let
\[
\xi = \left\{
\begin{array}{cc}
\max_{x_0<K} \xi_{x_0} & \mbox{if $N$ odd}\\
0 
& \mbox{otherwise}.\\
\end{array}
\right.
\]
By the definition of $R^*$, it follows that $G(x_0, \ldots, x_{N-1},\xi,n) = \frac{1}{2}$ for all $x_0 < K$.  By Lemma \ref{lm:carry} again, 
$\inf_{x_N} \Gamma_K(G; x_1, \ldots, x_N, n) \leq \frac{1}{2}$.

We now perform the inductive step for (\ref{lm:swap::sup}).  Suppose that $N-J>1$ and set $H = \inf_{x_{J+1}} \ldots Q_{x_N} G$.  
By the inductive hypothesis, it suffices to show that 
$\sup_{x_J} \Gamma_K(H; x_1, \ldots, x_J, n) \leq \frac{1}{2}$ if and only if $\Gamma_K(\sup_{x_J} H; x_1, \ldots, x_{J - 1}, n) \leq \frac{1}{2}$. 
Without loss of generality, we assume $\sup_{x_J} \Gamma_K(H; x_1, \ldots, x_J, n) \leq \frac{1}{2}$
By Lemma \ref{lm:carry}, for all $x_J \in \ N$ and all $x_0 < K$, 
$H(x_0, x_1, \ldots, x_J, n) = \frac{1}{2}$.  
By Lemma \ref{lm:carry} again, 
$\Gamma_K(\sup_{x_J} H ; x_1, \ldots, x_{J - 1}, n) \leq \frac{1}{2}$.

We now carry out the inductive step for (\ref{lm:swap::inf}). 
In this case, we consider the function $H = \sup_{x_{J+1}} \ldots Q_{x_N} G(x_0, \ldots, x_N, n)$.  
It suffices to show that $\inf_{x_J} \Gamma_K(H; x_1, \ldots, x_J, n) \leq \frac{1}{2}$ if and only if 
$\Gamma_K (\inf_{x_J} H; x_1, \ldots, x_{J - 1}, n) \leq \frac{1}{2}$. 
Without loss of generality, we assume $\Gamma_K (\inf_{x_J} H; x_1, \ldots, x_{J - 1}, n) \leq \frac{1}{2}$.  By Lemma \ref{lm:carry}, for every $x_0 < K$, 
$\inf_{x_J} H(x_0, \ldots, x_J, n) = \frac{1}{2}$, 
whence, for every $x_0 < K$, there exists $\xi_{x_0} \in \N$ so that $H(x_0, \ldots, x_{J-1},\xi_{x_0}, n) = \frac{1}{2}$.  
Let 
\[
\xi = \left\{
\begin{array}{cc}
\max_{x_0 < K} \xi_{x_0} & \mbox{$J$ odd} \\
0 
& \mbox{otherwise}. \\
\end{array}
\right.
\]
By the definition of $R^*$, $H(x_0, x_1, \ldots, x_{J-1},\xi, n) = \frac{1}{2}$ for all $x_0 < K$.  
By Lemma \ref{lm:carry}, 
$\Gamma_K(\inf_{x_J} H; x_0, \ldots, x_{J - 1}, n) = \frac{1}{2}$.  
\end{proof}

We note that while Lemma \ref{lm:swap} is hardly the key result of this section, it is nevertheless somewhat surprising.  
In general, one does not expect to be able to interchange summation with $\sup$ or $\inf$.  
It is here that the use of $R^*$ comes in to consideration and provides a path to a weaker conclusion but one that is 
just strong enough to effect the rest of the proof.

\begin{proof}[Proof of Theorem \ref{thm:encode}]
It suffices to prove (\ref{thm:encode::forall}); part (\ref{thm:encode::exists}) follows by considering complements.
Once again, set $G = 1 - \frac{1}{2} \chi_{R^*}$. 

Suppose $n \in \vec{\forall} R$.  It follows from Lemmas \ref{lm:inf.sup} and \ref{lm:forall.R.*}  that 
$$\sup_{x_0} \inf_{x_1} \ldots Q_{x_N} G(x_0, \ldots, x_N, n) = \frac{1}{2}.$$  
Thus, by Lemma \ref{lm:carry}, 
$\Gamma_K(\inf_{x_1} \ldots Q_{x_N} G; n) \leq \frac{1}{2}$.  
By Lemma \ref{lm:swap}, we have that
$$\inf_{x_1} \ldots Q_{x_N} \Gamma_K(G; x_1, \ldots, x_N, n) \leq \frac{1}{2}.$$  
Since $G \leq 1$, it follows that 
\[
\inf_{x_1} \ldots Q_{x_N} \Gamma(G; x_1, \ldots, x_N, n) \leq \frac{1}{2} + 2^{-(K+1)}
\]
 for all $K \in \N$.
Hence, $\sup_{x_0} \inf_{x_1} \ldots Q_{x_N} \Gamma(G; x_1, \ldots, x_N, n) \leq \frac{1}{2}$. 

Conversely, suppose $\inf_{x_1} \sup_{x_2} \ldots Q_{x_N} \Gamma(G; x_1, \ldots, x_N, n) \leq \frac{1}{2}$.
Since $G > 0$, for every $K \in \N$, 
$\inf_{x_1} \sup_{x_2} \ldots Q_{x_N} \Gamma_K(G; x_1, \ldots, x_N, n) \leq \frac{1}{2}$.
By Lemmas \ref{lm:carry} and \ref{lm:swap}, for every $x_0 < K$, 
$\inf_{x_1} \sup_{x_2} \ldots Q_{x_N} G(x_0, \ldots, x_N, n) = \frac{1}{2}$.  
Thus, $\sup_{x_0} \inf_{x_1} \sup_{x_2} \ldots Q_{x_N} G(x_0, \ldots, x_N, n) = \frac{1}{2}$. 
It follows from Lemma \ref{lm:inf.sup} that $n \in \vec{\forall} R^*$.  Thus, by Lemma \ref{lm:forall.R.*}, 
$n \in \vec{\forall} R$. 
\end{proof}

\section{Finitary diagram results- upper bounds}\label{sec:fin.up}

We begin by considering the quantifier-free diagrams.

\begin{proposition}\label{prop:atomic}
If $\M$ is a computably presentable $L$-structure, then the closed quantifier-free diagram of $\M$ is $\Pi^0_1$ and the 
open quantifier-free diagram of $\M$ is $\Sigma^0_1$.
\end{proposition}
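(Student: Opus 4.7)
The plan is to establish, by induction on quantifier-free sentences $\phi$, that $\phi^\M$ is a computable real uniformly in a G\"odel number for $\phi$, and then to read off the two complexity bounds as standard facts about approximation of computable reals.

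First I would observe that, for any closed term $t$ of $L$, the interpretation $t^\M$ is a rational point of $\M^\sharp$, with a name as a rational point uniformly computable from $t$. Indeed, the algebra generated by the distinguished points is by definition closed under every function of $\M$; applied to the constants of $L$ (viewed as $0$-ary function symbols) and iterated, this says exactly that every closed term denotes a rational point, and a syntactic traversal of $t$ produces such a name effectively.

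Next, by induction on the structure of the quantifier-free sentence $\phi$, I would exhibit an algorithm which on input $(\phi,k)$ returns a rational $q_{\phi,k}$ with $|q_{\phi,k}-\phi^\M|<2^{-k}$. For atomic $\phi=P(t_1,\ldots,t_n)$, the previous observation supplies rational-point names for the $t_i^\M$, and the computability of the presentation $\M^\sharp$ says precisely that $P^\M$ is uniformly computable on tuples of rational points. For the inductive step, the three connectives $\neg$, $\tfrac12$, and $\intdiff$ correspond to the real-valued operations $x\mapsto 1-x$, $x\mapsto x/2$, and $(x,y)\mapsto\max(x-y,0)$, each of which preserves (uniform) computability of reals; straightforward $\varepsilon$-management (e.g.\ compute each subformula to accuracy $2^{-(k+1)}$) yields the desired uniform approximation.

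Granted such a uniformly computable approximation scheme, the bounds on the two diagrams fall out of the usual characterizations
\[
\phi^\M \leq q \iff \forall k\in\N\,\bigl(q_{\phi,k} \leq q + 2^{-k}\bigr), \qquad
\phi^\M < q \iff \exists k\in\N\,\bigl(q_{\phi,k} + 2\cdot 2^{-k} < q\bigr),
\]
giving $\Pi^0_1$ for the closed quantifier-free diagram and $\Sigma^0_1$ for the open one. No step is really hard here; the only point that needs attention is the observation about closed terms, which is forced by the definition of the algebra generated by a set, together with the bookkeeping in the inductive step ensuring the modulus of approximation propagates through the connectives.
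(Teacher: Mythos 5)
Your proposal is correct and follows essentially the same route as the paper: the paper's proof consists precisely of the observation that $\phi\mapsto\phi^\M$ is computable (uniformly) on quantifier-free sentences, which you spell out via closed terms denoting rational points, computability of the predicates on rational points, and preservation of computability under the connectives. The final reduction of $\phi^\M\leq q$ and $\phi^\M< q$ to $\Pi^0_1$ and $\Sigma^0_1$ conditions on the approximations $q_{\phi,k}$ is the standard argument the paper leaves implicit, and your inequalities check out.
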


\begin{proof}
The proposition follows from the observation that if $\M$ is computably presentable, then 
the map $\phi \mapsto \phi^\M$ is computable on the set of quantifier-free sentences of $L$.
\end{proof}

We note that the proof of Proposition \ref{prop:atomic} is uniform; that is, from an index of a presentation of $\M$, it is possible to compute a $\Pi^0_1$ index of the closed quantifier-free diagram of $\M$ and a $\Sigma^0_1$ index of the open quantifier-free diagram of $\M$.

We now consider the higher-level diagrams.

\begin{theorem}\label{thm:fin.up}
Let $\M$ be a computably presentable $L$-structure, and let $N$ be a positive integer.  
\begin{enumerate}
	\item The closed $\Pi_N$ diagram of $\M$ is $\Pi^0_N$, and the open $\Pi_N$ diagram of $\M$ is $\Sigma^0_{N+1}$.
	\label{thm:fin.up::pi}
	
	\item The closed $\Sigma_N$ diagram of $\M$ is $\Pi^0_{N+1}$, and the open $\Sigma_N$ diagram of $\M$ is $\Sigma^0_N$. \label{thm:fin.up::sigma}
\end{enumerate}
Moreover, the results of (\ref{thm:fin.up::pi}) and (\ref{thm:fin.up::sigma}) hold uniformly in the sense that from $N$ and an index for a computable presentation for $\M$, one can compute an index for any of the above diagrams.
\end{theorem}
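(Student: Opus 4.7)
The plan is to prove the theorem by simultaneous induction on $N$ over all four diagrams, with Proposition \ref{prop:atomic} as the base case and with each induction step translating a single outer $\sup$ or $\inf$ over $|\M|$ into quantification over $\mathbb{Q}(\M^\sharp)$. To make the induction close, I would strengthen the claim to a parameterized version in which the formulas in each diagram are allowed to have free variables filled in by rational points. Equivalently, one works in the expansion of $L$ obtained by naming every point of $\mathbb{Q}(\M^\sharp)$ by a constant symbol; this expansion has a computable presentation whose complexity is governed by the same index as $\M^\sharp$, so no complexity is lost.

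The technical content of the induction is carried by the following four equivalences, each of which follows from the uniform continuity of the relevant formula together with the density of $\mathbb{Q}(\M^\sharp)$ in $|\M|$:
\begin{align*}
\sup\nolimits_{\bar x}\phi(\bar x)^\M \leq q &\ \Longleftrightarrow\ \forall\,\bar a\in\mathbb{Q}(\M^\sharp)\ \phi(\bar a)^\M \leq q,\\
\sup\nolimits_{\bar x}\phi(\bar x)^\M < q &\ \Longleftrightarrow\ \exists\,k\ \forall\,\bar a\in\mathbb{Q}(\M^\sharp)\ \phi(\bar a)^\M \leq q - 2^{-k},\\
\inf\nolimits_{\bar x}\phi(\bar x)^\M \leq q &\ \Longleftrightarrow\ \forall\,k\ \exists\,\bar a\in\mathbb{Q}(\M^\sharp)\ \phi(\bar a)^\M < q + 2^{-k},\\
\inf\nolimits_{\bar x}\phi(\bar x)^\M < q &\ \Longleftrightarrow\ \exists\,\bar a\in\mathbb{Q}(\M^\sharp)\ \phi(\bar a)^\M < q.
\end{align*}

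Given these, the inductive step becomes mechanical. Assume the result at level $N-1$. For a $\Pi_N$ formula $\sup_{\bar x}\phi$ with $\phi$ a $\Sigma_{N-1}$ formula, the first identity expresses a closed $\Pi_N$ instance as $\forall\bar a$ over the parameterized closed $\Sigma_{N-1}$ diagram (which is $\Pi^0_N$ by induction), yielding $\Pi^0_N$; the second expresses an open $\Pi_N$ instance as $\exists k\,\forall\bar a$ over the same diagram, yielding $\Sigma^0_{N+1}$. For a $\Sigma_N$ formula $\inf_{\bar x}\phi$ with $\phi$ a $\Pi_{N-1}$ formula, the third identity expresses a closed $\Sigma_N$ instance as $\forall k\,\exists\bar a$ over the parameterized \emph{open} $\Pi_{N-1}$ diagram (which is $\Sigma^0_N$ by induction), giving $\Pi^0_{N+1}$; the fourth expresses an open $\Sigma_N$ instance as $\exists\bar a$ over the open $\Pi_{N-1}$ diagram, giving $\Sigma^0_N$. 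It is essential that closed diagrams feed into open diagrams at the next level and vice versa, which is why all four diagrams must be tracked simultaneously.

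The conceptual heart of the theorem — and the origin of the unexpected jump from $\Pi^0_N$ to $\Pi^0_{N+1}$ for the closed $\Sigma_N$ case — is the third identity: $\inf \leq q$ cannot be certified by any single rational approximation of the infimum and therefore carries an unavoidable extra universal quantifier, whereas $\sup \leq q$ is already a universal statement over a countable dense set. The uniformity half of the theorem is automatic because every operation invoked above — enumerating rational tuples of $\M^\sharp$, computing rational approximations to $\phi(\bar a)^\M$ in the quantifier-free base case, and converting arithmetical indices through each quantifier manipulation — is effective in $N$ and in an index for the presentation. I do not anticipate any real obstacle beyond the bookkeeping of keeping the four inductive hypotheses straight and applying the correct identity at each outer quantifier; no estimate beyond uniform continuity is required.
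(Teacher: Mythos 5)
Your proposal is correct and follows essentially the same route as the paper's proof: induction on $N$, with each outer $\sup$ or $\inf$ over $|\M|$ replaced by quantification over $\mathbb{Q}(\M^\sharp)$ via density and uniform continuity, and with the closed/open diagrams feeding into one another across levels exactly as in the paper (your four equivalences are, up to harmless reformulation, the four displayed equivalences there). Your explicit strengthening to parameterized diagrams (equivalently, naming the rational points by constants) is only a more careful bookkeeping of a step the paper leaves implicit, not a different argument.
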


\begin{proof} Throughout this proof, we fix a computable presentation $\M^\sharp$ of $\M$.  We proceed by induction on $N$, the base case being true by Proposition \ref{prop:atomic}.  We now fix a positive integer $N$ and assume that  (\ref{thm:fin.up::pi}) and (\ref{thm:fin.up::sigma}) hold uniformly for every $M < N$.  

Fix a $\Pi_N$ sentence $\phi$ and a rational number $q$.  Note that $\phi$ has the form $\sup_{\overline{x}}\psi$, where $\psi$ is a $\Sigma_{N-1}$ wff of $L$ and $\overline{x}$ is a tuple of variables.
 Since the rational points of $\M^\sharp$ are dense, $\sup_{\overline{a}\in \mathbb{Q}(\M^\sharp)}\psi^\M(\overline{a})=\sup_{\overline{a}\in |\M|}\psi^\M(\overline{a})$. 
Thus,
\begin{eqnarray*}
\phi^\M\leq q 
	&\iff & (\forall k\in\N) \ (\forall\overline{a}\in\mathbb{Q}(\M^\sharp)) \ \psi^\M(\overline{a})\leq q+2^{-k}.
\end{eqnarray*}

If $N = 1$, then by the uniformity of Proposition \ref{prop:atomic}, the statement $\psi^\M(\overline{a})\leq q+2^{-k}$ is a $\Pi^0_1$ condition on $\phi,\overline{a},k$.  If $N > 1$, then this statement is a $\Pi^0_N$ condition since (\ref{thm:fin.up::sigma}) is assumed to hold uniformly for $M<N$. In either case it then follows that $\phi^\M\leq q$ is a $\Pi^0_N$ condition on $\phi,q$. 

Furthermore,
\begin{eqnarray*}
\phi^\M< q  
	& \iff & (\exists k\in\N) \ (\forall \overline{a}\in\mathbb{Q}(\M^\sharp)) \ \psi^\M(\overline{a})\leq q-2^{-k}.
\end{eqnarray*}

As before, if $N = 1$, then the statement $\psi^\M(\overline{a})\leq q-2^{-k}$ is a $\Pi^0_1$ condition on $\phi,\overline{a},k$. If $N > 1$, then this statement is a $\Pi^0_N$ condition since (\ref{thm:fin.up::sigma}) is assumed to hold uniformly for $M<N$. In either case, it follows that $\phi^\M< q$ is a $\Sigma^0_{N+1}$ condition on $\phi,q$.

Now fix a $\Sigma_N$ sentence $\phi$ and a rational number $q$. Then $\phi$ has the form $\inf_{\overline{x}}\psi$, where $\psi$ is a $\Pi_{N-1}$ wff of $L$ and $\overline{x}$ is a tuple of variables. Again, since the rational points of $\M^\sharp$ are dense, $\inf_{\overline{a}\in \mathbb{Q}(\M^\sharp)}\psi^\M(\overline{a})=\inf_{\overline{a}\in |\M|}\psi^\M(\overline{a})$. Thus,
\begin{eqnarray*}
\phi^\M\leq q& \iff 
& (\forall k\in\N) \  (\exists \overline{a}\in\mathbb{Q}(\M^\sharp)) \ \psi^\M(\overline{a})< q+2^{-k}.
\end{eqnarray*}
If $N = 1$, then the statement $\psi^\M(\overline{a})< q+2^{-k}$ is a $\Sigma^0_1$ condition on $\phi,\overline{a},k$. If $N > 1$, then this statement is a $\Sigma^0_N$ condition since (\ref{thm:fin.up::pi}) is assumed to hold uniformly for $M<N$. In either case, it then follows that $\phi^\M\leq q$ is a $\Pi^0_{N+1}$ condition on $\phi,q$. 

Finally,
\begin{eqnarray*}
\phi^\M< q 
	&\iff & (\exists k\in\N) \ (\exists \overline{a}\in\mathbb{Q}(\M^\sharp)) \ \psi^\M(\overline{a})< q-2^{-k}.
\end{eqnarray*}
If $N = 1$, then the statement $\psi^\M(\overline{a})< q-2^{-k}$ is a $\Sigma^0_1$ condition on $\phi,\overline{a},k$. If $N > 1$, then this statement is a $\Sigma^0_N$ condition since (\ref{thm:fin.up::pi}) is assumed to hold uniformly for $M<N$. In either case, it then follows that $\phi^\M< q$ is a $\Sigma^0_N$ condition on $\phi,q$.  

Finally, we note that these arguments are uniform in the sense described above. 
\end{proof}

\section{Finitary diagram results- lower bounds}\label{sec:fin.lwr}

We demonstrate that the results in Section \ref{sec:fin.up} are the best possible by means of the following.

\begin{theorem}\label{thm:fin.lwr}
There is a language $L'$ and a computably presentable $L'$-structure $\M$ with the following properties:
\begin{enumerate}
	\item The closed quantifier-free diagram of $\M$ is $\Pi^0_1$-complete, and the open quantifier-free diagram of $\M$ is $\Sigma^0_1$-complete.
	
	\item For every positive integer $N$, the closed $\Pi_N$ diagram of $\M$ is $\Pi^0_N$-complete, and the open $\Pi_N$ diagram of $\M$ is $\Sigma^0_{N+1}$-complete.
	
	\item For every positive integer $N$, the closed $\Sigma_N$ diagram of $\M$ is $\Pi^0_{N+1}$-complete, and the open $\Sigma^0_N$ diagram of $\M$ is $\Sigma^0_N$-complete.
\end{enumerate}
\end{theorem}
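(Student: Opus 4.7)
The plan is to build a single computably presentable structure $\M$ over a carefully chosen language $L'$ whose diagrams simultaneously realize all of the claimed complexities. I take $|\M| = \N$ equipped with the $\{0,1\}$-valued discrete metric, so that every $[0,1]$-valued function on $\N^k$ is trivially uniformly continuous with constant modulus. Into $L'$ I put a constant symbol $c$ interpreted as $0$ and a unary function symbol $s$ interpreted as successor, so that each $n\in\N$ is named by the closed term $s^n(c)$. For each positive integer $N$, I include four predicate symbols of arity $N+1$: $P_N$ interpreted as $\Gamma(1-\frac{1}{2}\chi_{R_{2N}^*})$, $Q_N$ interpreted as $\Gamma(\frac{1}{2}\chi_{(\neg R_{2N+1})^*})$, $A_N$ interpreted as $1-\chi_{R_{2(N-1)}}$, and $B_N$ interpreted as $1-\chi_{R_{2N-1}}$, in each case with the final argument slot playing the role of the underlying relation's ``$n$'' variable; for the quantifier-free level I include unary predicates $P_0$ and $Q_0$ of the same $\Gamma$-form. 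Since an index of $\Gamma(f)$ is effectively obtainable from an index and a bound on $f$, and the family $(R_N)$ (hence $R_N^*$) is uniformly computable, each predicate is uniformly computable on $\N$, and taking $g:\N\to|\M|$ to be the identity yields a computable presentation $\M^\sharp$.

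The lower bounds for the ``shifted'' diagrams fall straight out of Theorem \ref{thm:encode}. For closed $\Sigma_N$, the $\Sigma_N$ sentence
\[
\phi_n \;=\; \inf_{x_1}\sup_{x_2}\cdots Q_{x_N}\; P_N(x_1,\ldots,x_N,s^n(c))
\]
satisfies $\phi_n^\M\leq \frac{1}{2}$ iff $n\in\vec\forall R_{2N}$ by Theorem \ref{thm:encode}(\ref{thm:encode::forall}), giving a computable many-one reduction from a $\Pi^0_{N+1}$-complete set. Dually, the $\Pi_N$ sentence obtained from $Q_N$ with the quantifier prefix $\sup\inf\cdots$ has value strictly less than $\frac{1}{2}$ iff $n\in\vec\exists R_{2N+1}$ by Theorem \ref{thm:encode}(\ref{thm:encode::exists}), yielding $\Sigma^0_{N+1}$-completeness of the open $\Pi_N$ diagram. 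The closed and open quantifier-free cases are the $N=0$ degenerations of these two reductions, using the quantifier-free sentences $P_0(s^n(c))\leq \frac{1}{2}$ and $Q_0(s^n(c))< \frac{1}{2}$ respectively.

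For the ``direct'' diagrams I bypass Theorem \ref{thm:encode} and use only Lemma \ref{lm:inf.sup}. The $\Pi_N$ sentence $\psi_n=\sup_{x_1}\inf_{x_2}\cdots Q_{x_N}\,A_N(x_1,\ldots,x_N,s^n(c))$ takes values in $\{0,1\}$, so $\psi_n^\M\leq 0$ iff $n\in\vec\forall R_{2(N-1)}$, a $\Pi^0_N$-complete set, giving the closed $\Pi_N$ lower bound. The open $\Sigma_N$ case is the mirror image: swap $\sup$ and $\inf$, replace $A_N$ by $B_N$, and use the strict inequality $<1$ to encode the $\Sigma^0_N$-complete set $\vec\exists R_{2N-1}$. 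The main technical obstacle is not any single deep step but the bookkeeping involved in verifying that the $\Gamma$-based predicates have uniformly computable interpretations (which rests on the effective tail bound $|\Gamma(f)-\Gamma_K(f)|\leq 2^{-(K+1)}$ when $|f|\leq 1$) and that $L'$, with its constant modulus of uniform continuity and its effective enumeration of symbols, meets the effectiveness hypotheses of Section \ref{sec:bak}; once those routine points are in place, each reduction is an immediate translation of the appropriate quantifier-counting identity into the syntax of continuous-logic diagrams.
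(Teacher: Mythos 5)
Your proposal is correct, and its engine is the same as the paper's: a structure on $\N$ with the discrete metric whose predicates are the $\Gamma$-averaged functions built from the fixed complete relations, so that Theorem \ref{thm:encode} turns the threshold-$\frac{1}{2}$ inequalities for closed $\Sigma_N$ and open $\Pi_N$ sentences into $\Pi^0_{N+1}$- and $\Sigma^0_{N+1}$-complete conditions; combined with the upper bounds of Theorem \ref{thm:fin.up}, this gives the two ``shifted'' completeness claims exactly as in the paper. The differences are organizational but worth noting. First, the paper hard-codes the parameter $n$ into an indexed family of predicate symbols $(P_{N,n})$ and treats the quantifier-free level with constants $c_n$ and the metric symbol, whereas you name $n$ by the closed term $s^n(c)$ and use unary predicates $P_0,Q_0$; your variant is a clean alternative (and sidesteps having to interpret constants by real values inside a discrete space, which is the slightly awkward part of the paper's quantifier-free device). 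Second, and more substantively, for the closed $\Pi_N$ and open $\Sigma_N$ diagrams the paper does not build new sentences at all: it observes that the closed $\Sigma_{N-1}$ (resp.\ open $\Pi_{N-1}$) diagram is already $\Pi^0_N$-complete (resp.\ $\Sigma^0_N$-complete) and that such sentences are also $\Pi_N$ (resp.\ $\Sigma_N$), so completeness is inherited from the level below; you instead give fresh direct reductions via the $\{0,1\}$-valued predicates $A_N,B_N$ and Lemma \ref{lm:inf.sup}, reading membership off the conditions $\psi_n^\M\leq 0$ and $\psi_n^\M<1$. The paper's route is shorter and needs fewer symbols; yours is self-contained at each quantifier level and makes the quantifier counting explicit, at the cost of enlarging the language. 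Both arguments are valid, and your verification points (uniform computability of $\Gamma(f)$ from an index and bound of $f$, uniform computability of $R_N^*$, trivial moduli over the discrete metric) are exactly the routine checks the paper also relies on.
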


\begin{proof}
Let $L'$ be the metric language that consists of the following.
\begin{enumerate}
	\item A constant symbol $\underline{0}$.

	\item A family of constant symbols $(c_n)_{n \in \N}$.  
	
	\item A family of predicate symbols $(P_{N,n})_{N,n \in \N}$, where $P_{2N, n}$ and $P_{2N + 1, n}$ are $(N+1)$-ary.
\end{enumerate}
Here, each predicate symbol is assumed to have modulus of continuity equal to the constant function $1$.

We now define our $L'$-structure $\M$.  The underlying metric space of $\M$ is the set $\N$ of natural numbers equipped with its discrete metric.  We also set $\underline{0}^\M =0$.  In order to define the interpretations of the other symbols, we first set 
\[
f_N = \left\{
\begin{array}{cc}
\Gamma(1 - \frac{1}{2} \chi_{R_N^*}) & \mbox{$N$ even}\\
\Gamma(\frac{1}{2} \chi_{(\neg R_N)^*}) & \mbox{otherwise.}\\
\end{array}
\right.
\]
We can now set
\[
c_n^\M = \left\{
\begin{array}{cc}
f_0(n/2) & \mbox{$n$ even}\\
f_1((n - 1)/2) & \mbox{otherwise}.\\
\end{array}
\right.
\]
Finally, set $P^\M_{2N,n}(a_0, \ldots, a_N) = f_{2N + 2}(a_0, \ldots, a_N, n)$, and let 
$P^\M_{2N + 1, n}(a_0, \ldots, a_N)  = f_{2N + 3}(a_0, \ldots, a_N, n)$.

It is clear that $\M$ has a computable presentation.  In fact, one may simply take the $n$-th distinguished point to be 
$n$.

We first note that the closed atomic diagram of $\M$ is $\Pi^0_1$-complete.
To see this, let $\phi_n$ be the sentence $d(c_{2n},0)$.  Then, by Theorem \ref{thm:encode},
$\phi_n^\M \leq \frac{1}{2}$ if and only if $n \in \vec{\forall} R_0$.  

Similarly, the open atomic diagram of $\M$ is $\Sigma^0_1$-complete.  
This time, let $\phi_n$ be the sentence $d(c_{2n+1}, 0)$.  Then, by Theorem \ref{thm:encode}, 
$\phi_n^\M < \frac{1}{2}$ if and only if $n \in \vec{\exists} R_0$.  

Next fix a positive integer $N$.   For each $n \in \N$, let $\phi_n$ be the sentence
$$\inf_{x_1} \ldots Q_{x_N} P_{2N, n}(x_1, \ldots, x_N),$$ and let 
$\psi_n$ be the sentence $$\sup_{x_1} \ldots Q_{x_N} P_{2N + 1, n}(x_1, \ldots, x_N).$$  
By Theorem \ref{thm:encode}, $\phi_n^\M \leq \frac{1}{2}$ if and only if $n \in \vec{\forall} R_{2N}$.  
Thus, the closed $\Sigma_N$ diagram of $\M$ is $\Pi^0_{N+1}$-complete.
Also by Theorem \ref{thm:encode}, $\psi_n^\M < \frac{1}{2}$ if and only if $n \in \vec{\exists} R_{2N + 1}$.
Thus, the open $\Pi_N$ diagram of $\M$ is $\Sigma^0_{N + 1}$-complete.

Since the open $\Pi_{N - 1}$ diagram of $\M$ is $\Sigma^0_N$-complete, it follows that the 
open $\Sigma_N$ diagram of $\M$ is $\Sigma^0_N$-complete.   
It similarly follows that the closed $\Pi_N$ diagram of $\M$ is $\Pi^0_N$-complete.
\end{proof}

We conclude this section with some remarks on the choice of structure in the above proof.  
Since structures in continuous logic must be bounded, it might seem that the unit interval is a 
natural setting in which to construct these lower bounds.  However, it is well-known that the evaluation of maxima of computable functions on a computably compact space is a computable operation (see, e.g. Chapter 6 of \cite{Weihrauch.2000}).  Thus, the closed and open diagrams for a metric structure with a computably compact presentation are $\Pi^0_1$ and $\Sigma^0_1$ respectively.  It is fairly easy to see that the standard presentation of $[0,1]$ (i.e. the presentation in which the distinguished points are precisely the rational numbers in $[0,1]$) is computably compact.  
On the other hand, the natural numbers under the discrete metric provides the simplest non-trivial setting that is bounded and not compact.  

\section{Infinitary results}\label{sec:inf}

When formulating our diagram complexity results for infinitary logic, we actually must eschew the terminology of diagrams.  The reason for this is that, because of the coding of the computable infinitary formulae, these diagrams are capable of computing $\KO$, which itself is $\Pi^1_1$-complete.  In order to avoid this pitfall, we focus on the complexity of the right Dedekind cuts of reals of the form $\phi^\M$ where $\phi$ is infinitary.  To this end, for $x \in \R$, we let $D^>(x)$ denote the right Dedekind cut of $x$, that is, 
\[
D^{>}(x) = \{q \in \Q\ :\ q > x\}.
\]
We also set 
\[
D^{\geq}(x) = \{q \in \Q\ :\ q \geq x\}.
\]
Of course, if $x$ is irrational, then $D^>(x) = D^{\geq}(x)$.  In terms of evaluating complexity, differences only arise when considering uniformity.

We first prove our infinitary upper bound result which generalizes our bounds in the finitary case.

\begin{theorem}\label{thm:inf.up}
Let $\M$ be a computably presentable $L$-structure and let $\phi$ be a computable infinitary sentence of $L$.  
\begin{enumerate}
	\item If $\phi$ is $\Pi^c_\alpha$, then $D^>(\phi^\M)$ is $\Sigma^0_{\alpha+1}$ uniformly in a code of $\phi$, and 
	$D^\geq(\phi^\M)$ is $\Pi^0_\alpha$ uniformly in a code of $\phi$.\label{thm:inf.up::pi}  
	
	\item If $\phi$ is $\Sigma^c_\alpha$, then $D^>(\phi^\M)$ is $\Sigma^0_\alpha$ uniformly in a code of $\phi$, and $D^\geq(\phi^\M)$ is $\Pi^0_{\alpha + 1}$ uniformly in a code of $\phi$.\label{thm:inf.up::sigma}
\end{enumerate}
\end{theorem}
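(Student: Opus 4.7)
The plan is an effective transfinite induction on $\alpha$, mirroring the finitary argument of Theorem \ref{thm:fin.up} but organised around Kleene notations. Fix a computable presentation $\M^\sharp$. The base case $\alpha = 1$ follows from Proposition \ref{prop:atomic}: for quantifier-free $\phi$, the real $\phi^\M$ is uniformly computable, so $D^>(\phi^\M)$ is $\Sigma^0_1$ and $D^{\geq}(\phi^\M)$ is $\Pi^0_1$ uniformly from a code of $\phi$, which subsumes the stated bounds in both the $\Sigma^c_1$ and $\Pi^c_1$ cases.

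For the inductive step, fix $a \in \KO - \{1\}$ with $a \in \langle \alpha \rangle$ and assume the result uniformly at every notation $b <_\KO a$. In the $\Pi^c_\alpha$ case with $i = \overline{(\Pi, a, \emptyset, e)}$, density together with uniform continuity of each $\phi_j$ yields
\[
\phi^\M \;=\; \sup_{(j,\overline{z}) \in W_e \cap P(\Sigma,a,\emptyset)} \; \sup_{\overline{a} \in \mathbb{Q}(\M^\sharp)^{|\overline{z}|}} \phi_j^\M(\overline{a}),
\]
so $q \geq \phi^\M$ iff $q \geq \phi_j^\M(\overline{a})$ for every enumerated pair $(j,\overline{z})$ and every rational tuple $\overline{a}$. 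Each such $\phi_j$ is $\Sigma^c_\beta$ for some $\beta$ whose notation $b$ is strictly below $a$ in $\KO$, so the inner condition is $\Pi^0_{\beta+1}$ uniformly by the inductive hypothesis, and the two outer universal quantifiers leave us within $\Pi^0_\alpha$. For the open side, $q > \phi^\M$ iff $\exists k \in \N$ with $q - 2^{-k} \geq \phi^\M$, which prepends an existential quantifier and places the condition in $\Sigma^0_{\alpha+1}$. The $\Sigma^c_\alpha$ case is entirely dual: $\phi^\M$ is realised as an infimum over enumerated pairs and rational tuples, and the required bounds follow by flipping all the quantifiers and using $q \geq \phi^\M \iff \forall k\,(q + 2^{-k} > \phi^\M)$.

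The principal obstacle I anticipate is bookkeeping the uniformity in Kleene's $\KO$, especially at limit ordinals. A code $i$ for $\phi$ effectively yields both $a$ and a c.e. index $e$; from these one enumerates the sub-formula codes $j$, each carrying its own notation $b <_\KO a$, so that the inductive hypothesis can be applied uniformly and the resulting $\Sigma^0_{\beta+1}$ (or $\Pi^0_{\beta+1}$) indices glue into a single $\Sigma^0_\alpha$ (or $\Pi^0_\alpha$) index via the standard construction of the hyperarithmetical hierarchy; this works uniformly at successor and limit notations alike because $\KO$ supplies a canonical indexing of the constituent approximations. A secondary but necessary refinement is that the induction hypothesis must actually be stated for formulas with free variables interpreted at rational points, not only for parameter-free sentences; since substitution of distinguished points commutes with the recursive construction of codes and the predicates are computable on $\mathbb{Q}(\M^\sharp)$, this broadening is essentially notational and does not alter the inductive structure.
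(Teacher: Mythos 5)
Your proposal is correct and takes essentially the same route as the paper's proof: effective transfinite recursion on Kleene notations, using density of the rational points to express the cuts of $\phi^\M$ via quantification over the c.e.\ set of subformula codes and over rational tuples, with the $\exists k$/$\forall k$ shift by $2^{-k}$ accounting for the extra jump between $D^{>}$ and $D^{\geq}$. The only differences are cosmetic (you apply the inductive hypothesis to $D^{\geq}$ of the subformulas where the paper uses $q \pm 2^{-k} \in D^{>}$, with the same complexity count), and your remark that the hypothesis must be carried for formulas with free variables evaluated at rational points makes explicit something the paper treats implicitly.
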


\begin{proof}
 Fix a computable presentation $\M^\sharp$ of $\M$.  Let $\phi$ be a computable infinitary sentence of $L$.

Suppose $\phi \in \Sigma^c_\alpha \cup \Pi^c_\alpha$.   A code for $\phi$ yields a notation $a$ for $\alpha$.  In the following, all other ordinals considered are less than $\alpha$.  
For ease of exposition, we identify each $\beta \leq \alpha$ with its unique notation in $\{b\ :\ b \leq_\KO a\}$.  

We proceed by effective transfinite recursion.  Thus, we assume the following hold uniformly in an index of $\M^\#$.

\begin{enumerate}
	\item From a $\beta < \alpha$ and a code of a $\Pi_\beta^c$ sentence $\psi$, it is possible to compute a $\Pi^0_\beta$ index of $D^\geq(\psi^\M)$ and a $\Sigma^0_{\beta + 1}$-index of 
$D^>(\psi^\M)$.
	
	\item From a $\beta < \alpha$ and a code of a $\Sigma_\beta^c$ sentence $\psi$, it is possible to compute
	a $\Pi^0_{\beta + 1}$-index of $D^\geq(\psi)$ and a $\Sigma^0_\beta$-index of $D^>(\psi_i)$.
\end{enumerate}

First suppose that $\phi$ is a $\Pi_\alpha^c$ sentence.  Thus, $\phi$ has the form 
$\infdis_{i \in I} \sup_{\overline{x_i}}\phi_i$ where $I$ is c.e. and $\phi_i$ is $\Sigma_{\beta_i}^c$ for some $\beta_i < \alpha$.  Furthermore, we may assume $(\beta_i)_{i \in I}$ is computable.  
For $q \in \Q$, we have
\[
q \in D^\geq(\phi^\M) \Leftrightarrow (\forall k \in \N)( \forall i \in I)( \forall \overline{r} \in \Q(\M^\#))\ q + 2^{-k} \in D^>(\phi_i^\M(\overline{r})).
\]
As $\emptyset^{(\alpha)}$ computes $D^>(\Phi_i^\M(\overline{r}))$ uniformly in $i$, $D^\geq(\phi^\M)$ is 
co-c.e. in $\emptyset^{(\alpha)}$, that is, $D^\geq(\phi^\M)$ is $\Pi^0_\alpha$.
At the same time, 
\[
q \in D^>(\phi^\M)\ \iff\ (\exists k \in \N)( \forall i \in I)( \forall \overline{r} \in \Q(\M^\#))\ q - 2^{-k} \not \in D^>(\phi_i^\M(\overline{r}).
\]
Thus, $D^>(\phi^\M)$ is $\Sigma^0_2(\emptyset^{(\alpha)}) = \Sigma_{\alpha + 1}^0$.  

Now suppose $\phi$ is a $\Sigma^c_\alpha$ sentence.  Thus, $\phi$ has the form 
$\infconj_{i \in I} \inf_{\overline{x_i}}\phi_i$ where $I$ is c.e. and $\phi_i$ is $\Pi_{\beta_i}^c$ for some $\beta_i < \alpha$ uniformly in $i$.    
Let $q \in \Q$.  Then, 
\[
q \in D^\geq(\phi^\M)\ \iff\ (\forall k \in \N)( \exists i \in I)( \exists \overline{r} \in \Q(\M^\#)) q + 2^{-k} \in D^>(\phi_i^\M(\overline{r})).
\]
Thus, $D^\geq(\phi^\M)$ is $\Sigma^0_2(\emptyset^{(\alpha)}) = \Sigma^0_{\alpha + 1}$.
In addition, 
\[
q \in D^>(\phi_{i_0}^\M)\ \iff\ (\exists k \in \N)( \exists i \in I)( \exists \overline{r} \in \Q(\M^\#))\ q - 2^{-k} \not \in D^>(\phi_i^\M(\overline{r})).
\]
Thus, $D^>(\phi^\M)$ is $\Sigma^0_1(\emptyset^{(\alpha)}) = \Sigma^0_\alpha$.

As these arguments are all uniform in an index of $\M^\#$ and a code for $\phi$, the theorem is proven.
\end{proof}

We now demonstrate the optimality of Theorem \ref{thm:inf.up} by means of the following.

\begin{theorem}\label{thm:inf.lwr}
There is a language $L''$ and an $L''$-structure $\M$ so that the following hold for every computable ordinal $\alpha$.
\begin{enumerate}
	\item There is a computable sequence $(\psi_i)_{i \in \N}$ of $\Pi^c_\alpha$ sentences of $L''$ so that
	$\{i\ :\ \frac{1}{2} \in D^\geq(\psi^\M_\alpha)\}$ is $\Pi^0_\alpha$-complete.\label{thm:inf.lwr::pi.>=}
	
	\item There is a computable sequence $(\psi_i)_{i \in \N}$ of $\Sigma^c_\alpha$ sentences of $L''$ so that 
	$\{i\ :\ \frac{1}{2} \in D^>(\psi^\M_\alpha)\}$ is $\Sigma^0_\alpha$-complete.\label{thm:inf.lwr::sigma.>}
	
	\item There is a computable sequence $(\psi_i)_{i \in \N}$ of $\Pi^c_\alpha$ sentences of $L''$ so that 
	$\{i\ :\ \frac{1}{2} \in D^>(\psi^\M_\alpha)\}$ is $\Sigma^0_{\alpha + 1}$-complete.\label{thm:inf.lwr::pi.>}
	
	\item There is a computable sequence $(\psi_i)_{i \in \N}$ of $\Sigma^c_\alpha$ sentences of $L''$ so that
	$\{i\ :\ \frac{1}{2} \in D^\geq(\psi^\M_\alpha)\}$ is $\Pi^0_{\alpha+ 1}$-complete.\label{thm:inf.lwr::sigma.>=}
\end{enumerate}  
\end{theorem}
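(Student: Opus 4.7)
The plan is to proceed by effective transfinite induction on $\alpha$, taking the language $L''$ and the structure $\M$ to be those of Theorem~\ref{thm:fin.lwr}. The base case $\alpha = 1$ is immediate, since every finitary $\Sigma_1$ (resp.\ $\Pi_1$) sentence is, syntactically, a $\Sigma^c_1$ (resp.\ $\Pi^c_1$) computable infinitary sentence; thus the four parts of the theorem at $\alpha = 1$ reduce directly to the $N = 1$ case of Theorem~\ref{thm:fin.lwr}. For $\alpha > 1$, the guiding idea is that $\infdis$ ($= \sup$) and $\infconj$ ($= \inf$) allow us to simulate $\forall$ and $\exists$ over $\N$ directly, which is what the introduction hints at with ``simpler demonstrations of the lower bounds'': the $\Gamma$-encoding of Theorem~\ref{thm:encode}, which was needed in the finitary case to convert quantifiers into real arithmetic, is now avoidable. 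The quantitative identities
\[
\sup_i a_i < r \iff \exists k\,\forall i\, a_i \leq r - 2^{-k}, \qquad \inf_i a_i \leq r \iff \forall k\,\exists i\, a_i \leq r + 2^{-k},
\]
each contribute exactly one extra quantifier, while the clean cases $\sup \leq r$ and $\inf < r$ contribute none, matching precisely the four upper bounds of Theorem~\ref{thm:inf.up}.

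For the successor step $\alpha = \beta + 1$, consider part~(1): fix a $\Pi^0_\alpha$-complete set $A = \{n : \forall m\, g(n,m) \in B\}$ with $B \in \Sigma^0_\beta$, and, using the inductive hypothesis for part~(2), choose $\Sigma^c_\beta$ sentences $(\sigma_j)$ with $\sigma_j^\M < 1/2 \iff j \in B$ and values confined to $[0,1/2]$. Then $\psi_n := \infdis_m \sigma_{g(n,m)}$ (with a trivial outer $\sup$) is $\Pi^c_\alpha$ and $\psi_n^\M = \sup_m \sigma_{g(n,m)}^\M \leq 1/2 \iff n \in A$. Part~(2) is dual, using $\infconj$ and interchanging $\Sigma^c$ with $\Pi^c$. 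For parts~(3) and~(4), the same construction automatically extracts the extra quantifier via the displayed identities: for example, $\psi_n^\M < 1/2$ is equivalent to $\exists k\,\forall m\, \sigma_{g(n,m)}^\M \leq 1/2 - 2^{-k}$, a genuinely $\Sigma^0_{\alpha+1}$ condition that, by a direct reduction from a universal $\Sigma^0_{\alpha+1}$ relation, can be made $\Sigma^0_{\alpha+1}$-complete. For a computable limit ordinal $\alpha$ with effective fundamental sequence $(\beta_n)$, a $\Sigma^0_\alpha$-complete set is expressible as a uniformly computable union of $\Pi^0_{\beta_n}$-complete sets, and we encode it by $\infconj$-ing the $\Pi^c_{\beta_n}$ sentences supplied by the inductive hypothesis to obtain the required $\Sigma^c_\alpha$ sentence; the $\Pi^c_\alpha$ side is symmetric.

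The main obstacle will be maintaining, through every level of the recursion, the invariant that the constructed sentences have values cleanly confined to $[0,1/2]$ or $[1/2,1]$ with equality to $1/2$ occurring precisely on the target set. This dichotomy is what turns ``$\sup \leq 1/2$'' and ``$\inf < 1/2$'' into faithful combinatorial statements rather than mere inequalities of real numbers; the atomic values of $\M$, built from the $\Gamma$-functions of Theorem~\ref{thm:encode}, already respect it at the base level, and a routine but careful verification, in the spirit of Lemma~\ref{lm:swap}, shows that infinite suprema and infima of such values preserve the dichotomy as one climbs the hierarchy.
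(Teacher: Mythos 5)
Your successor step for parts (1)--(2) does not work as written, and this is not a cosmetic issue. If the $\Sigma^c_\beta$ sentences $\sigma_j$ only satisfy ``$\sigma_j^\M<\frac12$ iff $j\in B$, with values in $[0,\frac12]$,'' then $\psi_n=\infdis_m\sigma_{g(n,m)}$ has $\psi_n^\M=\sup_m\sigma_{g(n,m)}^\M\le\frac12$ for \emph{every} $n$ (a supremum of reals each strictly below $\frac12$ can equal $\frac12$), so $\frac12\in D^\geq(\psi_n^\M)$ holds identically and the reduction collapses. To make $\infdis$ and $\infconj$ faithfully simulate $\forall$ and $\exists$ at the threshold $\frac12$ you must carry an \emph{exact} two-valued invariant, which is what the paper's Lemma \ref{lm:encode} does: $\Pi^c_\alpha$ encodings take values exactly in $\{\frac12,1\}$ and $\Sigma^c_\alpha$ encodings exactly in $\{0,\frac12\}$, with the smaller value occurring precisely on the set (and the two sides are exchanged by negation). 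Your limit step has the same bookkeeping problem: an $\infconj$ of $\{\frac12,1\}$-valued $\Pi^c_{\beta_n}$ sentences never drops below $\frac12$, so one must renormalize (apply the connective $\frac12$, as in the paper's adaptation of Ash--Knight) to land back on $\{0,\frac12\}$.

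The deeper gap is in parts (3)--(4). Once you enforce the clean two-valued invariant that (1)--(2) require, the strict cut $\{n:\psi_n^\M<\frac12\}$ of your constructed $\Pi^c_\alpha$ sentences is either empty or just the level-$\alpha$ set you encoded (e.g.\ for $\{0,\frac12\}$-valued $\sigma$'s, $\sup_m\sigma_{g(n,m)}^\M<\frac12$ iff $\forall m\,g(n,m)\in B$, a $\Pi^0_\alpha$ condition), so the ``extra quantifier'' does not appear automatically; your assertion that $\exists k\,\forall m\,\sigma_{g(n,m)}^\M\le\frac12-2^{-k}$ ``can be made $\Sigma^0_{\alpha+1}$-complete by a direct reduction'' is precisely the construction that is missing, and it is incompatible with the invariant you propose to maintain. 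What is needed are $\Pi^c_\alpha$ sentences whose values approach $\frac12$ from one side in a manner governed by an extra number quantifier, \emph{without} leaving the class $\Pi^c_\alpha$; this is the actual content of the paper's Lemma \ref{lm:sum}, which shows that the weighted sum $\sum_n 2^{-(n+1)}\psi_n^\M$ is the value of a single $\Pi^c_\alpha$ sentence (using that $\avg(a,b)=\max\{a\intdiff\frac12(a\intdiff b),\,b\intdiff\frac12(b\intdiff a)\}$ commutes with $\sup$ and $\inf$). Applying it to a $\Pi^c_\alpha$ encoding of a $\Pi^0_\alpha$ relation $R$ gives $\phi_n^\M<\frac12$ iff $\exists x\,R(n,x)$, yielding $\Sigma^0_{\alpha+1}$-completeness, with (4) by complements. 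Your sketch contains no substitute for this mechanism, so (3) and (4) are not established; relatedly, the paper works in a different structure (constants on $[0,1]$ interpreted as $\frac12(1-\chi_{R_{2N+1}})$) rather than the structure of Theorem \ref{thm:fin.lwr}, whose predicates take the non-two-valued $\Gamma$-values and hence do not directly supply the clean encodings your induction needs.
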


The remainder of this section is dedicated to the proof of Theorem \ref{thm:inf.lwr}.  We begin with the construction 
of $L''$ and $M''$.

Let $L_0$ be a language consisting of one constant symbol $\underline{q}$ for every $q\in \Q\cap[0,1]$ and let $\M_0$ be the $L_0$-structure whose underlying metric space is $[0,1]$ with its usual metric and which interprets each $\underline{q}$ as $q$.  Let $L''$ be the expansion of $L_0$ obtained by adding a family $(c_{N, n, x_1, \ldots, x_{N + 1}})_{N, n, x_1, \ldots, x_{N+1} \in \N}$
of constant symbols.  

Let $\M$ be the expansion of $\M_0$ obtained by setting 
$c_{N, n, x_1, \ldots, x_{N+1}}^\M = \frac{1}{2}(1 - \chi_{R_{2N+1}}(n,x_1, \ldots, x_{N+1}))$.
Since $(R_N)_{N \in \N}$ is computable, it follows that $\M$ is computably presentable.

We now verify that $L''$ and $\M$ satisfy the conclusions of Theorem \ref{thm:inf.lwr}.  We will need a little additional terminology and two lemmas.

Suppose $(\psi_i)_{i \in \N}$ is a sequence of $\Pi^c_\alpha$ sentences of $L''$.  
We say that a set $S$ is \emph{encoded} by $(\psi_i)_{i \in \N}$ if $\psi_i^\M = 1 - \frac{1}{2} \chi_S(i)$ for all $i$.

Similarly, if $(\psi_i)_{i \in \N}$ is a sequence of $\Sigma^c_\alpha$ sentences of $L''$, 
we say that a set $S$ is \emph{encoded} by $(\psi_i)_{i \in \N}$ if $\psi_i^\M = \frac{1}{2}(1 - \chi_S(i))$ for all $i$.

\begin{lemma}\label{lm:encode}
Let $\alpha$ be a computable ordinal.
\begin{enumerate}
	\item Every $\Sigma^0_\alpha$ set is encoded by a computable sequence of $\Sigma^c_\alpha$ sentences.
	\label{lm:encode::sigma}

	\item Every $\Pi^0_\alpha$ set is encoded by a computable sequence of $\Pi^c_\alpha$ sentences.\label{lm:encode::pi}	
\end{enumerate}
\end{lemma}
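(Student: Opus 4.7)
My plan is to prove the two statements simultaneously by effective transfinite induction on $\alpha$, following the recursive structure of $\Sigma^c_\alpha$ and $\Pi^c_\alpha$. For the base case $\alpha = 1$, given a $\Sigma^0_1$ set $S$, use the $\Sigma^0_1$-completeness of $\vec{\exists} R_1$ fixed in Section~\ref{sec:prelim} to produce a reduction $r \colon \N \to \N$ with $i \in S \iff \exists x\, R_1(r(i), x)$ and take
\[
\psi_i \;:=\; \infconj_{x \in \N} d(c_{0, r(i), x}, \underline{0}).
\]
Since $c_{0, n, x}^\M = \frac{1}{2}(1 - \chi_{R_1}(n, x)) \in \{0, 1/2\}$ and $d(a, \underline{0})^\M = a$ for $a \in [0,1]$, the value $\psi_i^\M = \inf_x c_{0, r(i), x}^\M$ is $0$ when $i \in S$ and $1/2$ otherwise, so $(\psi_i)$ is a computable sequence of $\Sigma^c_1$ sentences encoding $S$. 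Dually, for a $\Pi^0_1$ set $S$ compute $r$ with $i \notin S \iff \exists x\, R_1(r(i), x)$ and set $\psi_i := \infdis_{x \in \N} d(c_{0, r(i), x}, \underline{1})$; then $\psi_i^\M = \sup_x (1 - c_{0, r(i), x}^\M)$ equals $1$ if $i \notin S$ and $1/2$ if $i \in S$, as required.

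For the inductive step, assume both statements hold uniformly in a code at every $\beta < \alpha$. Given $S \in \Sigma^0_\alpha$ with an index, extract a uniformly computable normal form $S = \bigcup_n P_n$ where each $P_n$ is $\Pi^0_{\beta_n}$ for a computable sequence of notations $\beta_n < \alpha$. By induction one obtains a doubly-indexed computable family $(\psi_{n, i})$ of $\Pi^c_{\beta_n}$ sentences encoding $P_n$, so that $\psi_{n, i}^\M = 1/2$ exactly when $i \in P_n$ and $\psi_{n, i}^\M = 1$ otherwise. Define
\[
\phi_i \;:=\; \infconj_{n \in \N} \bigl(\psi_{n, i} \intdiff \underline{1/2}\bigr).
\]
The inner formula takes value $0$ if $i \in P_n$ and $1/2$ otherwise, so $\phi_i^\M = 0$ when $i \in S$ and $1/2$ when $i \notin S$, making $(\phi_i)$ a computable sequence of $\Sigma^c_\alpha$ sentences encoding $S$. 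The $\Pi^0_\alpha$ case is dual: write $S = \bigcap_n Q_n$ with each $Q_n \in \Sigma^0_{\beta_n}$ uniformly, invoke induction to obtain $\Sigma^c_{\beta_n}$ sentences $\psi_{n, i}$ encoding $Q_n$, and take $\phi_i := \infdis_{n \in \N} \bigl(\psi_{n, i} \oplus \underline{1/2}\bigr)$, where $a \oplus b := \neg(\neg a \intdiff b)$ is truncated addition.

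The main obstacle is the bookkeeping that shows these syntactic manipulations preserve the relevant classification. Specifically, applying the connective $\intdiff \underline{1/2}$ or $\oplus \underline{1/2}$ to a $\Pi^c_{\beta_n}$ (respectively $\Sigma^c_{\beta_n}$) sentence must again be a $\Pi^c_{\beta_n}$ (respectively $\Sigma^c_{\beta_n}$) sentence, computably in the original code. This follows by pushing the connective inward through the outer $\infconj$ or $\infdis$ and then through each $\sup_{\overline{z}}$ or $\inf_{\overline{z}}$, using that these commute with truncated subtraction or addition by a rational from the appropriate side, down to the quantifier-free leaves where the connective is absorbed into a new quantifier-free formula. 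The authors explicitly flag this kind of code-transformation as routine in the paragraph following the definition of the computable infinitary formulas, so the only residual work is to organize the whole construction as an effective transfinite recursion on $\KO$ so that the uniformity in a notation for $\alpha$ survives through limit levels.
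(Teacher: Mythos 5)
Your proof is correct in substance, but it follows a genuinely different route from the paper at levels above $1$. The paper splits into two regimes: for finite $\alpha = N+1$ it does \emph{no} induction at all, instead using the purpose-built constants $c_{N,n,x_1,\ldots,x_{N+1}}$ and Lemma \ref{lm:inf.sup} to write down $\psi_n = \infconj_{x_1}\infdis_{x_2}\cdots\, d(c_{N,n,x_1,\ldots,x_{N+1}},\underline{0})$ directly, which encodes the complete set $\vec{\exists}R_{2N+1}$ and hence (uniformly, by $m$-reduction) every $\Sigma^0_{N+1}$ set, with no connective manipulations needed; for $\alpha\geq\omega$ it imports the construction of Theorem 7.9 of Ash--Knight, replacing $\true,\false$ by $d(\underline{0},\underline{0})$ and $d(\underline{0},\underline{1})$ and then halving. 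You instead run a self-contained effective transfinite recursion through every level, extracting a uniform normal form $S=\bigcup_n P_n$ with $P_n$ uniformly $\Pi^0_{\beta_n}$, $\beta_n<\alpha$, and shifting truth values by $\tfrac12$ via $\intdiff$ and truncated addition. This works, and the key fact you flag --- that applying such a connective with a rational constant to a $\Pi^c_\beta$ (resp.\ $\Sigma^c_\beta$) sentence yields, effectively in the code, an equivalent $\Pi^c_\beta$ (resp.\ $\Sigma^c_\beta$) sentence --- is true because these connectives are monotone and continuous, hence commute with $\sup$ and $\inf$; the paper relies on the same kind of routine code transformation when it forms $\tfrac12\phi_n$ in the limit case, so this is an acceptable level of rigor, though you should note that the commutation argument needs nonempty index sets (which your construction can maintain, since all your index sets are $\N$) and should be organized via the recursion theorem. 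Your approach buys a single uniform argument and avoids citing Ash--Knight, at the cost of having to justify the normal-form extraction uniformly in a notation for $\alpha$ and to prove the connective-pushing lemma; the paper's route gets the finite levels for free from the design of $\M$ and outsources the transfinite bookkeeping to a known classical theorem. Two small repairs: $\underline{1/2}$ is a constant symbol, not a sentence, so write the shift as $\psi_{n,i}\intdiff d(\underline{1/2},\underline{0})$ (and similarly for the truncated sum); and the lemma as stated also covers $\alpha=0$, which you omit but which is trivial (for a computable set, output $d(\underline{0},\underline{0})$ or $d(\underline{0},\underline{\frac12})$ according to membership, as the paper does).
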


\begin{proof}
We prove (\ref{lm:encode::sigma}).  Part (\ref{lm:encode::pi}) then follows by considering complements.  Suppose $S$ is $\Sigma^0_\alpha$.

If $\alpha = 0$, then we let 
\[
\psi_i = \left\{
\begin{array}{cc}
d(\underline{0},\underline{0}) & i \in S\\
d(\underline{0}, \underline{\frac{1}{2}}) & \mbox{otherwise.}\\
\end{array}
\right.
\]
Next suppose $\alpha = N + 1$ where $N \in \N$. 
Let
\[
\psi_n = \infconj_{x_1} \infdis_{x_2} \ldots \mathcal{C}_{x_{N+1}} d(c_{N, n, x_1, \ldots, x_{N+1}}, \underline{0}).
\] 
Here, $\mathcal{C}$ is $\infconj$ if $N$ is even and $\infdis$ if $N$ is odd.

It follows from Lemma \ref{lm:inf.sup} that $(\psi_n)_{n \in \N}$ encodes $\vec{\exists} R_{2N+1}$.  
Since $\vec{\exists} R_{2N + 1}$ is $\Sigma_{N + 1}^0$-complete, it follows that every $\Sigma^0_{N + 1}$ set is encoded by 
a sequence of computable $\Sigma_{N + 1}^c$ sentences.   Furthermore, the construction of such a sequence from a $\Sigma^0_{N+1}$ index is uniform.

Suppose $\alpha \geq\omega$.  Similar to the proof of Theorem 7.9 of \cite{Ash.Knight.2000}, we construct a sequence $(\phi_n)_{n \in \N}$ of $\Sigma^0_\alpha$ sentences so that $\phi_n^\M = 1 - \chi_S(n)$.  In particular, we replace 
$\true$ and $\false$ with $d(\underline{0},\underline{0})$ and $d(\underline{0},\underline{1})$ respectively.   Setting $\psi_n = \frac{1}{2} \phi_n$ yields the desired formulae.
\end{proof}

\begin{lemma}\label{lm:sum}
If $(\psi_n)_{n \in \N}$ is a computable sequence of $\Pi^c_\alpha$ sentences of $L''$, then there is a computable $\Pi^c_\alpha$ sentence $\phi$ of $L''$ so that 
\[
\phi^\M = \sum_{n = 0}^\infty 2^{-(n+1)} \psi_n^\M.
\]
Furthermore, a code of $\phi$ can be computed from an index of $(\psi_n)_{n \in \N}$.
\end{lemma}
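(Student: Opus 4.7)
The plan is to realize $\phi^\M = \sum_{n=0}^\infty 2^{-(n+1)}\psi_n^\M$ as the supremum of its (monotone) partial sums. Setting $x \oplus y := \neg((\neg x) \intdiff y) = \min(x+y, 1)$ for truncated addition, and writing $\tfrac{1}{2^{n+1}}\psi_n$ for the $(n+1)$-fold application of the halving connective to $\psi_n$, I would form for each $N$ the partial-sum formula
\[
S_N := \bigl(\tfrac{1}{2}\psi_0\bigr) \oplus \bigl(\tfrac{1}{4}\psi_1\bigr) \oplus \cdots \oplus \bigl(\tfrac{1}{2^{N+1}}\psi_N\bigr)
\]
and then set $\phi := \infdis_{N \in \N} S_N$. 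Since $\sum_{n \in \N} 2^{-(n+1)} = 1$, no truncation ever actually takes effect, so $S_N^\M = \sum_{n=0}^N 2^{-(n+1)}\psi_n^\M$; the sequence $(S_N^\M)_N$ is nondecreasing and converges to the target, hence $\phi^\M$ equals the required sum.

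For this to yield the desired sentence I need two effective closure facts. First, if $(\sigma_N)_{N\in\N}$ is a uniformly computable sequence of $\Pi^c_\alpha$ sentences then so is $\infdis_N \sigma_N$: writing each $\sigma_N = \infdis_{(j,\overline z) \in W_{e_N}\cap P(\Sigma,a,\overline x)} \sup_{\overline z}\phi_j$, one amalgamates the c.e. sets $W_{e_N}$ (tagged by $N$) into a single c.e. set, obtaining a code of $\infdis_N \sigma_N$ uniformly. Second, and this is the main obstacle, $\Pi^c_\alpha$ must be effectively closed under halving and under binary $\oplus$; I would establish this by effective transfinite induction on codes, proving simultaneously the parallel closure for $\Sigma^c_\alpha$.

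For the inductive step, halving commutes with the outer block, $\tfrac{1}{2}\bigl(\infdis_{(j,\overline z)}\sup_{\overline z}\phi_j\bigr) = \infdis_{(j,\overline z)} \sup_{\overline z} \tfrac{1}{2}\phi_j$, reducing to halving the inner $\Sigma^c_{\beta_j}$-components, where the inductive hypothesis applies. For truncated sum on two $\Pi^c_\alpha$ formulas, after renaming bound variables to be disjoint, one uses
\[
(\sup_{\overline u} f) \oplus (\sup_{\overline v} g) = \sup_{\overline u,\overline v}(f\oplus g) \quad\text{and}\quad (\infdis_i f_i) \oplus (\infdis_j g_j) = \infdis_{i,j}(f_i \oplus g_j),
\]
both valid because $\min(\cdot,1)$ is monotone and $\sup$ over independent variables distributes over $+$; this reduces $\oplus$ on two $\Pi^c_\alpha$ formulas to $\oplus$ on their inner $\Sigma^c_\beta$ constituents. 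The $\Sigma^c_\alpha$ half of the induction is dual, with $\inf$ and $\infconj$ replacing $\sup$ and $\infdis$. With these closures in hand, iterating the binary operation $N$ times yields $S_N \in \Pi^c_\alpha$ uniformly in $N$, and the first closure fact then produces $\phi$ together with a code computable from an index of $(\psi_n)_{n\in\N}$.
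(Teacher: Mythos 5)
Your proof is correct and follows essentially the same route as the paper's: both realize the series as $\infdis_K$ of the finite partial sums and argue that these stay in $\Pi^c_\alpha$ (uniformly) because a monotone continuous connective commutes with $\sup$, $\inf$, $\infdis$, $\infconj$, together with effective closure of $\Pi^c_\alpha$ under computable $\infdis$. The only difference is the choice of connective: you use truncated addition $\neg((\neg x)\intdiff y)$ with iterated halving, observing that truncation never takes effect since the weights sum to at most $1$, whereas the paper uses the binary average $\avg(a,b)=\max\{a\intdiff \tfrac{1}{2}(a\intdiff b),\, b\intdiff \tfrac{1}{2}(b\intdiff a)\}$ and the recursion $\sum_{n=0}^{K+1}2^{-(n+1)}a_n=\avg\bigl(a_0,\sum_{n=0}^{K}2^{-(n+1)}a_{n+1}\bigr)$, which keeps every intermediate value in $[0,1]$ automatically.
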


\begin{proof}
For $a,b \in [0,1]$, let $\avg(a,b) = \frac{1}{2}(a + b)$.  By inspection, \\
\[
\avg(a,b) = \max\{a \intdiff \frac{1}{2} (a \intdiff b), b \intdiff \frac{1}{2} (b \intdiff a)\}.
\]
Thus, we may regard $\avg$ as a connective.  If $\phi, \psi$ are quantifier-free, then so is $\avg(\phi,\psi)$.

Since $\avg$ is increasing in each variable and continuous, it follows that 
$\avg(\sup_j a_j, \sup_k b_k) = \sup_{j,k} \avg(a_j,b_k)$ and 
$\avg(\inf_j a_j, \inf_k b_k) = \inf_{j,k} \avg(a_j,b_k)$.  From this it follows that $\avg(\phi,\psi)$ is equivalent to a $\Pi^c_\alpha$ (resp. $\Sigma^c_\alpha$) sentence if $\phi$ and $\psi$ are $\Pi^c_\alpha$ (resp. $\Sigma^c_\alpha$) sentences.

When $a_0, \ldots, a_{K + 1} \in [0,1]$, note that
\[
\sum_{n = 0}^{K + 1} 2^{-(n + 1)} a_n = \avg(\phi_0, \sum_{n = 0}^K 2^{-(n+1)} \phi_{n + 1}).
\]
Thus, we may regard inner product with $(2^{-(n + 1)})_{n = 0}^K$ as a connective.  
Furthermore, a code of $\sum_{n = 0}^K 2^{-(n + 1)} \phi_n$ can be computed from codes of $\phi_0, \ldots, \phi_K$.  

Finally, when $a_n \in [0,1]$, we have 
\[
\sum_{n = 0}^\infty a_n = \sup_K \sum_{n = 0}^K a_n.
\]
The conclusion of the lemma follows.
\end{proof}

\begin{proof}[Proof of Theorem \ref{thm:inf.lwr}]
Parts (\ref{thm:inf.lwr::pi.>=}) and (\ref{thm:inf.lwr::sigma.>}) follow directly from Lemma \ref{lm:encode}.  

Now suppose $S$ is $\Sigma^0_{\alpha+1}$ complete.  Take a $\Pi^0_\alpha$ binary relation $R$ so that 
$S = \vec{\exists} R$.  By Lemma \ref{lm:encode}, there is a computable family $(\psi_{n,x_1})_{n, x_1\in \N}$ 
of $\Pi^c_\alpha$ sentences so that for all $n,x_1 \in \N$, $\psi_{n,x_1}^\M = 1 - \frac{1}{2} \chi_R(n,x_1)$.
By Lemma \ref{lm:sum}, there is a computable sequence $(\phi_n)_{n \in \N}$ of $\Pi^c_\alpha$ sentences
so that 
\[
\phi_n^\M = \sum_{x_1 = 0}^\infty 2^{-(x_1 + 2)} \psi_{n,x_1}^\M.
\]
It then follows that $n \in S$ if and only if $\frac{1}{2} \in D^>(\phi_n^\M)$, establishing (\ref{thm:inf.lwr::pi.>}).  
Part (\ref{thm:inf.lwr::sigma.>=}) follows by considering complements. 
\end{proof}

Returning to an earlier point, we note that the closed and open quantifier-free diagrams of $\M$ are $\Pi^0_1$-complete and $\Sigma^0_1$-complete respectively.  
To see this, fix a $\Sigma^0_1$ complete set $C$, and let $(c_s)_{s = 0}^\infty$ be an effective enumeration of $C$. 
Since $C$ is infinite, we may assume this enumeration is one-to-one.  
Let
\[
p_n = 
\begin{cases}
\frac{1}{2} - 2^{-s} & \text{ if } n= c_s\\
\frac{1}{2} & \text{ otherwise. }
\end{cases}
\]
It is fairly straightforward to show that $(p_n)_{n \in \N}$ is computable as a sequence of reals.  Furthermore, 
$p_n < \frac{1}{2}$ if and only if $n \in C$.  Since $L''$ contains a constant symbol for each rational number, it follows that the open quantifier-free diagram of $\M$ is $\Sigma^0_1$-complete.  
The $\Pi^0_1$-completeness of the closed quantifier-free diagram follows by considering complements.

We also note that while computably compact domains are insufficient for demonstrating lower bounds in the finitary case, $[0,1]$ works swimmingly in the infinitary case.

Finally, we note that the infinitary sentences in the above proof are built up from quantifier-free sentences.  
Thus, they do not require moduli of continuity.  Therefore, although we have framed our work in an effectivization of the infinitary continuous logic of Eagle, our results will hold in any reasonable effectivization of the infinitary continuous logic of Ben-Yaacov and Iovino.
 
\section{Conclusion}\label{sec:conclusion}

We have introduced a framework for examining the complexity of the quantifier levels of the finitary and infintary theory of a computably presented metric space and we have pinned down the complexity at each level in terms of the hyperarithmetical hierarchy.  Our demonstration of the lower bounds in the finitary case introduces a novel method for encoding $\Sigma_N$ and $\Pi_N$ conditions into series inequalities.   Our demonstration of the lower bounds in the infinitary case is mostly straightforward.  However, our supporting result that computable infinitary logic can represent the inner product with $(2^{-(n+1)})_{n = 0}^\infty$ from the connectives $\neg$, $\intdiff$, $\frac{1}{2}$ appears to be new.  
Our examples in these demonstrations are somewhat artificial.  
We leave open directions such as the analysis of the theories of specific structures such as Lebesgue spaces or the construction of examples at different levels of complexity within natural classes such as Banach spaces or $C^*$-algebras.

\bibliographystyle{amsplain}
\bibliography{ourbib}

\providecommand{\bysame}{\leavevmode\hbox to3em{\hrulefill}\thinspace}
\providecommand{\MR}{\relax\ifhmode\unskip\space\fi MR }
\providecommand{\MRhref}[2]{%
  \href{http://www.ams.org/mathscinet-getitem?mr=#1}{#2}
}
\providecommand{\href}[2]{#2}
\begin{thebibliography}{10}

\bibitem{Ash.Knight.2000}
C.~J. Ash and J.~Knight, \emph{Computable structures and the hyperarithmetical
  hierarchy}, Studies in Logic and the Foundations of Mathematics, vol. 144,
  North-Holland Publishing Co., Amsterdam, 2000.

\bibitem{Ben-Yaacov.Berenstein.Henson.Usvyatsov.2008}
Ita\"\i Ben~Yaacov, Alexander Berenstein, C.~Ward Henson, and Alexander
  Usvyatsov, \emph{Model theory for metric structures}, Model theory with
  applications to algebra and analysis. {V}ol. 2, London Math. Soc. Lecture
  Note Ser., vol. 350, Cambridge Univ. Press, Cambridge, 2008, pp.~315--427.

\bibitem{Ben-Yaacov.Iovino.2009}
Ita\"{\i} Ben~Yaacov and Jos\'{e} Iovino, \emph{Model theoretic forcing in
  analysis}, Ann. Pure Appl. Logic \textbf{158} (2009), no.~3, 163--174.

\bibitem{Chisholm.Moses.1998}
John Chisholm and Michael Moses, \emph{An undecidable linear order that is
  {$n$}-decidable for all {$n$}}, Notre Dame J. Formal Logic \textbf{39}
  (1998), no.~4, 519--526.

\bibitem{Cooper.2004}
S.~Barry Cooper, \emph{Computability theory}, Chapman \& Hall/CRC, Boca Raton,
  FL, 2004.

\bibitem{Eagle.2017}
Christopher~J. Eagle, \emph{Expressive power of infinitary {$[0,1]$}-logics},
  Beyond first order model theory, CRC Press, Boca Raton, FL, 2017, pp.~3--22.

\bibitem{Fokina.Goncharov.Kharizanova.Kudinov.Turetski.2015}
E.~B. Fokina, S.~S. Goncharov, V.~Kharizanova, O.~V. Kudinov, and D.~Turetski,
  \emph{Index sets of {$n$}-decidable structures that are categorical with
  respect to {$m$}-decidable representations}, Algebra Logika \textbf{54}
  (2015), no.~4, 520--528, 544--545, 547--548.

\bibitem{Fokina.Harizanov.Melnikov.2014}
Ekaterina~B. Fokina, Valentina Harizanov, and Alexander~G. Melnikov,
  \emph{Computable model theory}, Turing's Legacy: Developments from Turing's
  Ideas in Logic (Rod Downey, ed.), Cambridge University Press, Cambridge,
  2014.

\bibitem{Franklin.McNicholl.2020}
Johanna N.~Y. Franklin and Timothy~H. McNicholl, \emph{Degrees of and lowness
  for isometric isomorphism}, J. Log. Anal. \textbf{12} (2020), Paper No. 6,
  23.

\bibitem{Melnikov.2013}
Alexander~G. Melnikov, \emph{Computably isometric spaces}, J. Symbolic Logic
  \textbf{78} (2013), no.~4, 1055--1085.

\bibitem{Sacks.1990}
Gerald~E. Sacks, \emph{Higher recursion theory}, Perspectives in Mathematical
  Logic, Springer-Verlag, Berlin, 1990. \MR{1080970 (92a:03062)}

\bibitem{Soare.1987}
R.I. Soare, \emph{Recursively enumerable sets and degrees}, Springer-Verlag,
  Berling, Heidelberg, 1987.

\bibitem{Weihrauch.2000}
Klaus Weihrauch, \emph{Computable analysis}, Texts in Theoretical Computer
  Science. An EATCS Series, Springer-Verlag, Berlin, 2000.

\end{thebibliography}

\end{document}